\newcommand{\nc}{\newcommand}
\nc{\one}{\mbox{\bf 1}}
\nc{\invtensor}{\underset{\leftarrow}{\otimes}}
\nc{\const}{\operatorname{const}}
\nc{\ad}{\operatorname{ad}}
\nc{\tr}{\operatorname{tr}}
\nc{\tp}{\operatorname{top}}
\nc{\rank}{\operatorname{rank}}
\nc{\corank}{\operatorname{corank}}
\nc{\codim}{\operatorname{codim}}
\nc{\sdim}{\operatorname{sdim}}
\nc{\mult}{\operatorname{mult}}
\nc{\ds}{\operatorname{ds}}
\nc{\tail}{\operatorname{tail}}
\nc{\howl}{\operatorname{howl}}
\nc{\spn}{\operatorname{span}}
\nc{\defect}{\operatorname{defect}}
\nc{\Sym}{\operatorname{Sym}}
\nc{\sym}{\operatorname{sym}}
\nc{\id}{\operatorname{id}}
\nc{\Id}{\operatorname{Id}}
\nc{\Ree}{\operatorname{Re}}
\nc{\hi}{\operatorname{hi}}
\nc{\htt}{\operatorname{ht}}
\nc{\at}{\operatorname{at}}
\nc{\str}{\operatorname{str}}
\nc{\Iso}{\operatorname{Iso}}
\nc{\Ker}{\operatorname{Ker}}
\nc{\rker}{\operatorname{rKer}}
\nc{\im}{\operatorname{Im}}
\nc{\osp}{\mathfrak{osp}}
\nc{\sgn}{\operatorname{sgn}}
\nc{\F}{\operatorname{F}}
\nc{\Mod}{\operatorname{Mod}}
\nc{\DS}{\operatorname{DS}}
\nc{\Soc}{\operatorname{Soc}}
\nc{\Sch}{\operatorname{Sch}}
\nc{\Hom}{\operatorname{Hom}}
\nc{\End}{\operatorname{End}}
\nc{\supp}{\operatorname{supp}}
\nc{\Card}{\operatorname{Card}}
\nc{\Ann}{\operatorname{Ann}}
\nc{\Ind}{\operatorname{Ind}}
\nc{\Coind}{\operatorname{Coind}}
\nc{\wt}{\operatorname{hwt}}
\nc{\hwt}{\operatorname{wt}}
\nc{\arc}{\operatorname{arc}}
\nc{\Arc}{\operatorname{Arc}}
\nc{\ch}{\operatorname{ch}}
\nc{\sch}{\operatorname{sch}}
\nc{\mdim}{\operatorname{mdim}}
\nc{\Stab}{\operatorname{Stab}}
\nc{\Irr}{\operatorname{Irr}}
\nc{\Spec}{\operatorname{Spec}}
\nc{\Res}{\operatorname{Res}}
\nc{\Aut}{\operatorname{Aut}}
\nc{\Ext}{\operatorname{Ext}}
\nc{\Prec}{\operatorname{Prec}}
\nc{\Fract}{\operatorname{Fract}}
\nc{\gr}{\operatorname{gr}}
\nc{\deff}{\operatorname{def}}
\nc{\core}{\operatorname{core}}
\nc{\HC}{\operatorname{HC}}
\nc{\dpth}{\operatorname{dpth}}
\nc{\sw}{\operatorname{sw}}
\nc{\red}{\operatorname{red}}
\nc{\pos}{\operatorname{pos}}
\nc{\wdchi}{\widetilde{\chi}}
\nc{\wdH}{\widetilde{H}}
\nc{\wdN}{\widetilde{N}}
\nc{\wdM}{\widetilde{M}}
\nc{\wdO}{\widetilde{O}}
\nc{\wdR}{\widetilde{R}}
\nc{\wdV}{\widetilde{V}}
\nc{\wdC}{\widetilde{C}}
\nc{\pari}{\operatorname{dex}}
\nc{\atyp}{\operatorname{atyp}}
\nc{\Core}{\operatorname{Core}}
\nc{\Obj}{\operatorname{Obj}}
\nc{\Dglie}{\operatorname{{\mathcal D}glie}}
\nc{\Fin}{\operatorname{{\mathcal F}in}}
\nc{\pr}{\operatorname{pr}}
\nc{\Adm}{\operatorname{\mathcal{A}dm}}
\nc{\Sg}{{\cS(\fg)}}
\nc{\Shg}{{\cS(\fhg)}}
\nc{\Ug}{{\cU(\fg)}}
\nc{\Uhg}{{\cU(\fhg)}}
\nc{\Sh}{{\cS(\fh)}}
\nc{\Uh}{{\cU(\fh)}}
\nc{\Uhh}{{\cU(\fhh)}}
\nc{\Zg}{{{\mathcal{Z}}(\fg)}}
\nc{\Vir}{{\mathcal{V}ir}}
\nc{\NS}{{\mathcal{N}S}}
\nc{\tZg}{{\widetilde{\mathcal Z}({\mathfrak g})}}
\nc{\Zk}{{\mathcal Z}({\mathfrak k})}
\nc{\Up}{{\mathcal U}({\mathfrak p})}
\nc{\Ah}{{\mathcal A}({\mathfrak h})}
\nc{\Ag}{{\mathcal A}({\mathfrak g})}
\nc{\Ap}{{\mathcal A}({\mathfrak p})}
\nc{\Zp}{{\mathcal Z}({\mathfrak p})}
\nc{\cR}{\mathcal R}
\nc{\cS}{\mathcal S}
\nc{\cP}{\mathcal P}
\nc{\cT}{\mathcal{T}}
\nc{\CC}{\mathcal C}
\nc{\cA}{\mathcal A}
\nc{\cE}{\mathcal E}
\nc{\cU}{\mathcal U}
\nc{\cZ}{\mathcal Z}
\nc{\cN}{\mathcal N}
\nc{\cM}{\mathcal M}
\nc{\cL}{\mathcal L}
\nc{\cF}{\mathcal F}
\nc{\fg}{\mathfrak g}
\nc{\cB}{\mathcal{B}}
\nc{\fo}{\mathfrak o}
\nc{\CO}{\mathcal O}
\nc{\CR}{\mathcal R}
\nc{\cK}{\mathcal{K}}
\nc{\cW}{\mathcal{W}}
\nc{\bM}{\mathbf{M}}
\nc{\bL}{\mathbf{L}}
\nc{\bN}{\mathbf{N}}
\nc{\zq}{\mathpzc q}
\nc{\fl}{\mathfrak l}
\nc{\fn}{\mathfrak n}
\nc{\fm}{\mathfrak m}
\nc{\fp}{\mathfrak p}
\nc{\fh}{\mathfrak h}
\nc{\ft}{\mathfrak t}
\nc{\fk}{\mathfrak k}
\nc{\fb}{\mathfrak b}
\nc{\fs}{\mathfrak s}
\nc{\fr}{\mathfrak r}
\nc{\fB}{\mathfrak B}
\nc{\vareps}{\varepsilon}
\nc{\varesp}{\varepsilon}
\nc{\veps}{\varepsilon}
\nc{\fsl}{\mathfrak{sl}}
\nc{\fgl}{\mathfrak{gl}}
\nc{\fso}{\mathfrak{so}}
\nc{\fosp}{\mathfrak{osp}}
\nc{\fsp}{\mathfrak{sp}}
\nc{\fq}{\mathfrak q}
\nc{\fsq}{\mathfrak{sq}}
\nc{\fpsq}{\mathfrak{psq}}
\nc{\fhg}{\hat{\fg}}
\nc{\fhn}{\hat{\fn}}
\nc{\fhh}{\hat{\fh}}
\nc{\fhb}{\hat{\fb}}
\nc{\hrho}{\hat{\rho}}
\nc{\hsl}{\hat{\fsl}}
\nc{\fpo}{\mathfrak{po}}
\nc{\dirlim}{\underset{\rightarrow}{\lim}\,}
\nc{\nen}{\newenvironment}
\nc{\ol}{\overline}
\nc{\ul}{\underline}
\nc{\ra}{\rightarrow}
\nc{\lra}{\longrightarrow}
\nc{\Lra}{\Longrightarrow}
\nc{\bo}{\bar{1}}
\nc{\Lla}{\Longleftarrow}
\nc{\Llra}{\Longleftrightarrow}
\nc{\thla}{\twoheadleftarrow}
\nc{\lang}{(}
\nc{\rang}{)}
\nc{\hra}{\hookrightarrow}
\nc{\iso}{\overset{\sim}{\lra}}
\nc{\ssubset}{\underset{\not=}{\subset}}
\nc{\vac}{|0\rangle}
\nc{\simka}{{\ \scriptscriptstyle _{{\sim}}^\text{\tiny{k}}\ }}
\nc{\Thm}[1]{Theorem~\ref{#1}}
\nc{\Prop}[1]{Proposition~\ref{#1}}
\nc{\Lem}[1]{Lemma~\ref{#1}}
\nc{\Cor}[1]{Corollary~\ref{#1}}
\nc{\Conj}[1]{Conjecture~\ref{#1}}
\nc{\Claim}[1]{Claim~\ref{#1}}
\nc{\Defn}[1]{Definition~\ref{#1}}
\nc{\Exa}[1]{Example~\ref{#1}}
\nc{\Rem}[1]{Remark~\ref{#1}}
\nc{\Note}[1]{Note~\ref{#1}}
\nc{\Quest}[1]{Question~\ref{#1}}
\nc{\Hyp}[1]{Hypoth\`ese~\ref{#1}}
\begin{document}
\setcounter{section}{-1}
\setcounter{tocdepth}{1}

\title[ ]
{Bipartite extension graphs and the Duflo--Serganova functor}

\author{Maria Gorelik }

\address[]{Dept. of Mathematics, The Weizmann Institute of Science,Rehovot 76100, Israel}
\email{maria.gorelik@weizmann.ac.il}





\begin{abstract}
We consider several examples when the extension graph  admits  a bipartition compatible with
the action of Duflo--Serganova functors.
\end{abstract}

\maketitle

\section{Introduction}
In this paper $\fg=\fg_0\oplus \fg_1$ is  a complex
Lie superalgebra and  $x\in\fg_1$ is such that $[x,x]=0$.
This text is based on some results and ideas of~\cite{MS}, \cite{GS}  and~\cite{Skw}.  
We give a more detailed presentation of some parts of~\cite{MS}, \cite{GS} which 
are slightly different for $\osp$ and $\fgl$-cases.

This text is not intended for publication; its different parts 
have been developed in other papers~\cite{GH2} and~\cite{Gbi}.

\subsection{}\label{introDex}
Let $\CC$ be a category of representations of
a Lie superalgebra $\fg$ and $\Irr(\CC)$ be the set of isomorphism classes
of simple modules in $\CC$.  Assume that the modules in $\CC$ are of  finite length.
\footnote{ this  can be substituted by existence of local composition series  constructed in~\cite{DGK}.}
In many examples the extension graph of $\CC$ is bipartite, i.e. there exists a map
$\pari:\Irr(\CC)\to \mathbb{Z}_2$ such that

\begin{enumerate}
\item[$ \,$]
(Dex1) $\Ext^1_{\CC}(L_1,L_2)=0\ $ if
$\ \ \pari(L_1)=\pari(L_2)$.
\end{enumerate}
In this paper we are interested in  examples when the map $\pari$ is ``compatible''  with the  Duflo-Serganova functors $\DS_x$, i.e. 
\begin{enumerate}
\item[$ \,$]
(Dex2)  one has $\ [\DS_x(L):L']=0\ $ if $\ \pari(L)\not=\pari(L')$.
\end{enumerate}

Note that in  (Dex2) we have to choose $\pari$ on $\Irr(\CC)$ and on 
$\Irr(\DS_x(\CC))$. If   $\pari$  satisfies (Dex1) and  (Dex2), 
 then $\DS_x(L)$ 
is completely reducible for each $L\in\CC$.

We say that a module $M$ is {\em pure} if for any subquotient
$L$ of $M$, $\Pi(L)$ is not a subquotient of $M$. Note that
(Dex2) implies the purity of $\DS_x(L)$ for each $L\in\Irr(\CC)$.

In what follows $\Fin(\fg)$ stands for the full subcategory 
of finite-dimensional $\fg$-modules which are competely reducible over
 $\fg_0$.
In this paper we consider the case when $\fg$ is a finite-dimensional 
Kac-Moody superalgebra and $\CC=\Fin(\fg)$; for this case
we require that (Dex2) holds for each $x\in X(\fg)$, where
$$X(\fg):=\{x\in\fg_1|\ ][x,x]=0\}.$$

For other cases it make sense to restrict (Dex2)
to certain values of $x$: for instance, for affine superalgebras $\fg$  it makes sense to consider $x$ such that $\DS_x(\fg)$
is affine  and for
$\CC=\CO(\fg)$ it makes sense to consider $x$ "preserving"  the category $\CO$
(see~\cite{Gdepth}, Sections 7 and 8 respectively).

The examples with $\pari$ satisfying (Dex1) and (Dex2) include
$\Fin(\fg)$ for $\fg=\fgl(m|n)$
(this follows from~\cite{HW}) and the full subcategory 
of integrable modules  in the category $\CO(\fgl(1|n)^{(1)})$
(this follows from~\cite{GSaff}). 
In Section~\ref{sectatyp1}
we will check the existence of $\pari$  for   
exceptional Lie superalgebras. 
We will introduce $\pari$ satisfying (Dex1) for $\osp(m|n)$-case;
in~\cite{GH} we will show that (Dex2) holds in this case too.
It would be interesting to find other examples
 with $\pari$ satisfying (Dex1)  and (Dex2). 
The strange superalgebras $\fp_n,\fq_n$ do not
admit $\pari$ satisfying (Dex1), (Dex2). 
By~\cite{GNS}, for $n\leq 3$,
 $\Fin(\fq_n)$ admits $\pari$ satisfying (Dex1);
this does not hold for larger $n$, see~\cite{MM}.
Moreover, the module $\DS_x(L)$ is not pure 
for each atypical $L\in\Irr(\Fin(\fq_2))$
(and $x\not=0$), see~\cite{Gdepth}, 5.5.2.
By contrast, the module $\DS_x(L)$ is  pure 
for  $L\in\Irr(\Fin(\fp_n))$ if $x$ is of rank $1$,
see~\cite{EAS}.

In~\cite{GS}, C.~Gruson and V.~Serganova  express
the character of a simple finite-dimensional $\osp(M|N)$-module
in terms of a basis consisting of ``Euler characters''. Using $\pari$
we will show that all coefficients in such formula have the same sign (equal to
$\pari(L)$) (we call this property ``positiveness'').
The similar formulae hold for the simple modules in $\Fin(\fg)$, where
$\fg$ is an exceptional Lie superalgebra or  $\fgl(1|n)^{(1)}$ (see~\cite{KW2},\cite{S3}). For $\fq_n$ the character formula of the above form was obtained in~\cite{PS1},\cite{PS2} (see also~\cite{Br}).
In~\cite{GH2} we will prove
a Gruson-Serganova type  character formula for $\fgl(m|n)$-case.

The reduced Grothendieck ring 
is the quotient of the Grothendieck ring
modulo the relation $[N]+[\Pi N]=0$. If $\CC$ is rigid,
then $*$ induces an involution of the reduced Grothendieck ring. 
By Hinich's Lemma, $\DS$-functor induces a homomorphism $\ds$
of the reduced Grothendieck rings; this homomorphism, introduced in~\cite{HW}, 
is compatible with the above involutions (in many cases the reduced Grothendieck ring is isomorphic to 
the ring of supercharacters so 
$\ds$ can be represented as the restriction of supercharacters
to a subalgebra of $\fh$; for the algebras from the list~(\ref{list}) 
the homomorphism $\ds$ was studied in~\cite{HR}).


\subsection{The map $\pari$ for finite-dimensional Kac-Moody superalgebras}\label{subsectpari}

Let $\fg$ be one of  the following superalgebras:

\begin{equation}\label{list}
\fgl(m|n), \osp(m|2n)\ \text{ for  }m,n\geq 0,\ D(2,1|a),\ 
F(4), \ G(3),\ \fsl(m|n)
\ \text{ for } m\not=n.
\end{equation}

Note that the  list~(\ref{list})  includes $\fgl(0|0)=\osp(1|0)=\osp(0|0)=0$, 
$\osp(2|0)=\mathbb{C}$ and the reductive Lie algebras $\fgl_m,
\mathfrak{o}_m,\mathfrak{sp}_m$. 
For each value of $x$, the algebra $\fg_x$ is again one of the algebras from the list~(\ref{list}).

Let $\fh$ be a Cartan subalgebra of $\fg$. 
We denote by $\Lambda_{m|n}$ the integral weight lattice in $\fh^*$;
this lattice is 
equipped by the standard parity function $p:\Lambda_{m|n}\to\mathbb{Z}_2$
(for $\fgl(m|n),\osp(M|N)$ and $G(3)$ the lattice $\Lambda_{m|n}$ is spanned
by $\vareps_i$s and $\delta_j$s 
with
$p(\vareps_i):=\ol{0}$ and $p(\delta_j):=\ol{1}$).
For our purposes  the study of the category $\Fin(\fg)$ of finite dimensional representations of $\fg$  reduces to study the category $\tilde{\cF}(\fg)$  with the modules whose weights lie in
 $\Lambda_{m|n}$. 
In its turn, the category $\tilde{\cF}(\fg)$ decomposes into a direct sum two equivalent categories  $$\tilde{\cF}(\fg) = \cF(\fg) \oplus \Pi (\cF(\fg)),$$
where the grading on the modules in $\cF(\fg)$ is induced by the parity function $p$.
Our goal is to  find a map
$$\pari: \Irr(\tilde{\cF}(\fg))\to \mathbb{Z}_2$$ satisfying
 (Dex1) and (Dex2).  By~\cite{Skw}, for each $L\in\Irr(\tilde{\cF}(\fg)))$
there exists $x$ such that  $\DS_x(L)$ is a non-zero typical module.
Therefore it is enough to define
$\pari$ on the typical simple modules. If   this is done in  such a way that
$\pari(\Pi(L))\not=\pari(L)$, then $\pari$ satisfying (Dex2) is unique and
satisfies
\begin{equation}\label{dexPi}
\pari(\Pi(L))\not=\pari(L) \ \ \text{ for each }\  L\in  \Irr(\tilde{\cF}(\fg)).
\end{equation}

\subsection{Reduction to $\DS_1$}\label{checkDex2}
In many cases it is enough to check (Dex2) for one particular value of $x$. 
We continue to consider the case when $\fg$ is as in~(\ref{list}) (the same reasoning work
for symmetrizable affine Lie superalgebras for $x$ as in Section 9 of~\cite{Gdepth}). We set 
$$\rank x:=\defect \fg-\defect \fg_x \ \ \ \text{ for }x\in X(\fg).$$
By~\cite{DS}, 
$\fg_x\cong \fg_{y}$ if $x,y\in X(\fg)$
are such that $\rank x=\rank y$; we set
$$X(\fg)_r:=\{x\in X(\fg)|\ \rank x=r\}.$$

 Using Lemma 2.4.1 in~\cite{Gdepth},
one can reduce (Dex2) to the case $x\in X(\fg)_1$
(see~\cite{GH}, Section 9 for details).
Fix any $x\in X(\fg)_1$ 
and denote $\DS_x$ by $\DS_1$. Using the results of~\cite{DS},
it is easy to see
 that for each $y\in X(\fg)_1$   there exists
 an automorphism $\phi\in \Aut(\fg)$ satisfying
$\phi(x)=y$; this automorphism induces
an isomorphism $\ol{\phi}: \fg_x\iso \fg_{y}$ and
$\DS_{x}(N^{\phi})=(\DS_y(N))^{\ol{\phi}}$. Thus $\DS_1$ 
is ``independent'' from the choice of $x$; in particular,
if the formula (Dex2) holds for $\DS_x$, then it holds for each $y\in X(\fg)_1$.
The argument of~\cite{GH}, Section 9 give  

{\em $\ \ \ \ \ $ if $\pari$ satisfies}
(Dex1), (\ref{dexPi}) {\em and  } (Dex2) {\em holds for some $x$ of rank $1$, 
then }
\begin{equation}\label{111}
\DS_{x'}(L)\cong \underbrace{\DS_1(\DS_1\ldots \DS_1}_{\rank x' \text{ times}}(L)\ldots)\ \ \ \text{ for any } L\in\Irr(\Fin(\fg))  \text{ and each } x'\in X(\fg).\end{equation}

In this way, the computation of $\DS_x(L)$ reduces
to the computation of the multiplicities  $[\DS_1(L'):L'']$ for
each quadruple $(\fg',\fg'', L', L'')$, where 
$$\fg':=\underbrace{\DS_1(\DS_1\ldots \DS_1}_{i \text{ times}}(\fg)\ldots ), \ \ 
\fg'':=\DS_1(\fg')$$
and $L'\in\Irr(\Fin(\fg'))$, $L''\in\Irr(\Fin(\fg''))$.

\subsubsection{}
The multiplicities $[\DS_1(L'):L'']$ were computed
for $\fgl(m|n)$ in~\cite{HW}.
For the exceptional cases we compute the multiplicities
in Section~\ref{sectatyp1}. For the remaining case $\osp(m|n)$
the multiplicities are computed 
in~\cite{GH}. In all these cases the following properties hold:
\begin{itemize}
\item
the module $\DS_1(L')$ is pure;
\item 
 $[\DS_1(L'):L'']\leq 2$;
\item
$[\DS_1(L'):L'']\leq 1$ for $\fg=\fgl(m|n)$;
\item
there exists $\pari$ satisfying (Dex1), (\ref{dexPi}) and the formula
(Dex2) for the case when $\rank x=1$.
\end{itemize}
By above, this implies (Dex2) (for any $x\in X(\fg)$) and shows that 
 $\DS_x(L)$ is pure, semisimple and
can be computed via the formula~(\ref{111}).

\subsubsection{}
In~\ref{caseF(4)} we compute $\DS_1(L)$ for $F(4)$. The results imply
that the image of the Grothendieck ring of $\cF(F(4))$ under the homomorphism $\ds$  coincides with $\sigma$-invariants in the Grothendieck ring of $\cF(\fsl_3)$ for $\sigma$ induced by a Dynkin diagram involution 
of $\fsl_3$.  For all other algebras from the list~(\ref{list}) a similar result is obtained 
 in~\cite{HR}.
 
\subsubsection{Remark}
By~\cite{EAS}, 
$\DS_1(L)$ is pure and multiplicity free for each $L\in \Irr(\Fin(\fp_n))$;
however,  $\DS_1(L)$ is not always semisimple, (\ref{111}) does not hold
and $\DS_1(\DS_1(L))$ is not always pure 
(see~\cite{Gdepth}, Example 3.4.3).

\subsubsection{Question}\label{conjecture}
Let $\fp\subset\fg$ be a parabolic subalgebra containing $\fb$
with the property that the defect of the Levi subalgebra of  $\fp$
is less by one than the defect of $\fg$. We denote by $L(\lambda)$
(resp., $L_{\fp}(\lambda)$) a simple  $\fg$ (resp., $\fp$) module
of the highest weight $\lambda$. For $L(\lambda)\in\tilde{\cF}(\fg)$
we define $\Gamma^i_{\fg,\fp}(L_{\fp}(\lambda))$ as in~\cite{GS} (see~\ref{GammaiGS}
below) and consider the multiplicities
$$m^{(i)}_{\lambda,\mu}:=[\Gamma^i_{\fg,\fp}(L_{\fp}(\lambda)):L_{\fg}(\mu)].$$
One has $m^{(0)}_{\lambda,\lambda}=1$.
In all our examples  $m^{(i)}_{\lambda,\mu}\in\{0,1\}$
and, except for the case $m^{(0)}_{\lambda,\lambda}$, one has
$$m^{(i)}_{\lambda,\mu}\not=0\ \Longrightarrow\ 
\pari(L(\lambda))-\pari(L(\mu))\equiv i+1 \mod 2.
\ $$

It is interesting to know whether these properties hold in other cases.

\subsection{Content of the paper}
In Section~\ref{sectionDS} we recall  the construction of $\DS$-functor.

In Section~\ref{sectatyp1} we consider the cases $\fg=D(2|1;a)$, $F(4)$, $G(3)$ and $\osp(3|2)$. We compute $\DS_x(L)$ for $L\in\Irr(\tilde{\cF}(\fg))$ and
check that $\pari$ 
satisfies (Dex1),(Dex2) and~(\ref{dexPi}). The modules $\DS_x(L)$ can be described as follows.
To each atypical
block $\cB$ in $\tilde{\cF}(\fg)$ we  assign a $\fg_x$-module $L'$ (this assignment is injective).
By~\cite{Germoni2}, for
 each atypical block of  $\osp(3|2), G(3)$  the extension graph 
$\Ext(\cB)$ if 
$D_{\infty}$; for  the cases $F(4)$, $D(2|1;a)$ the graphs of atypical blocks 
are $A^{\infty}_{\infty}$ and $D_{\infty}$
 (see~\cite{Germoni2}, \cite{Lilit}). 
If $\Ext(\cB)=D_{\infty}$, then $L'$ is simple and
$\DS_x(L)\cong L'$ if $L\in \Irr(\cB)$ is an ``end vertex''
of $D_{\infty}$ and $\DS_x(L)\cong \Pi^{i}(L')^{\oplus 2}$
if $L$ is the $i$th vertex counting from the ends. For $\Ext(\cB)= A^{\infty}_{\infty}$ one has
$\DS_x(L)\cong \Pi^i(L')$ for each $L\in\Irr(\cB)$.
 For 
$\fg=F(4)$, $D(2|1;a)$ 
 the module $L'$ corresponding to $D_{\infty}$-graph is a simple
$\fg_x$-module satisfying $(L')^*\cong L'$, whereas 
the module $L'$ corresponding to $A_{\infty}^{\infty}$-graph 
is of the form  $L'=V\oplus V^*$ , where $V$ is a simple
$\fg_x$-module with $V^*\not\cong V$.

In Section~\ref{Gamma} we construct for each block $\cB$ 
in $\tilde{\cF}(\fg)$ a graph $\hat{\Gamma}^{\chi}$ and its subgraph ${\Gamma}^{\chi}$ defined in terms
of the functors $\Gamma^i$ introduced in~\cite{Penkov} and~\cite{GS}
(we follow the definition in~\cite{GS}). The extension graph
 $\Ext(\cB)$ is a  a subgraph of  $\hat{\Gamma}^{\chi}$; the graph
${\Gamma}^{\chi}$ is useful for  Gruson-Serganova type character formulae.
In~\ref{bipartition} we introduce a notion  of ``parametric bipartition''
on the graphs $\hat{\Gamma}^{\chi}$, ${\Gamma}^{\chi}$; a parametric bipartition on $\hat{\Gamma}^{\chi}$ induces a bipartition on  $\Ext(\cB)$;
a parametric bipartition on ${\Gamma}^{\chi}$ gives 
 a ``positive'' Gruson-Serganova type character formulae.  In~\Cor{maincor}
we show that under a certain conditions (which hold in the $\osp$-case)
 $\Ext(\cB)$ is a  a subgraph of  ${\Gamma}^{\chi}$ and a 
parametric bipartition on ${\Gamma}^{\chi}$ induces a bipartition on  $\Ext(\cB)$.

The graphs $\hat{\Gamma}^{\chi}$, ${\Gamma}^{\chi}$ depend on the choice of triangular decomposition;
for the case $\osp(2|2)\cong \fsl(2|1)$
the graph
$\Ext(\cB)$ is a subgraph of ${\Gamma}^{\chi}$ for the ``mixed''
triangular decomposition and is not a  subgraph of ${\Gamma}^{\chi}$
for the distinushied one, see~\Rem{rem2221}.
By~\cite{MS}, for $\fgl(m|n)$-case the map $\pari$ gives a parametric biparition  on $\hat{\Gamma}^{\chi}$; we check that this also holds for 
 $\osp(2|2)$, $\osp(3|2)$,  $D(2|1;a)$, $F(4)$ and $G(3)$.

In Section~\ref{sectosp} we consider the principal block $\cB$ for 
$\fg=\osp(2n+t|2n)$ with $t=0,1,2$. In this case
the extension graph of $\cB$ is a subgraph of ${\Gamma}^{\chi}$. We describe $\pari$ which gives a parametric biparition on ${\Gamma}^{\chi}$;
this implies (Dex1) and 
 ``positiveness'' of the Gruson-Serganova  character formula.

In Section~\ref{arcs} we give a description of $\Gamma^{\chi}$ in the $\osp$-case
using the language of ``arch diagrams'' introduced in~\cite{GH}. 
The results of this section are not used in the rest of the paper.

In Appendix we explain why $\Ext(\cB)$ is a subgraph
of  $\hat{\Gamma}^{\chi}$ (this part essentially follows Sect. 6 of~\cite{MS}).

\subsection{Acknowledgments} The author is grateful to  I.~Entova-Aizenbud, V.~Hinich, T.~Heidersdorf, C.~Hoyt,  
I.~M.~Musson, S.~Reif, V.~Serganova, A.~Sherman and C.~Stroppel for numerous 
helpful discussions.

\subsection{Index of definitions and notation} \label{sec:app-index}
Throughout the paper the ground field is $\mathbb{C}$; 
$\mathbb{N}$ stands 
 for the set of non-negative integers. We will use the standard
Kac's notation~\cite{KLie}  for the root systems. 

\begin{center}
\begin{tabular}{lcl}
 $\pari$, (Dex1), (Dex2), pure & & \ref{introDex}\\

$\Lambda_{m|n},\ \tilde{\cF}(\fg), \cF(\fg)$   & & \ref{subsectpari}\\

$\Gamma^i_{\fp,\fq},\ ^iK^{\lambda,\nu}_{\fp,\fq}$ & & \ref{GammaiGS} \\


 increasing/descreasing paths & & \ref{decrincr}\\

decreasingly equivalent & & \ref{decreq}\\

(BB) & & \Lem{lemAA}\\

$\Lambda^{\chi}$, $\tail(\lambda)$, $\fp_{\lambda}$  & & \ref{usefulgraphs}\\

$\hat{\Gamma}^{\chi}$, $\kappa$, & & \ref{tildeGammachi}\\

 $\Gamma^{\chi}$ , (Tail) & & \ref{Gammachi}\\

 parametric bipartition & & \ref{bipartition}\\

$\mathcal{E}_{\lambda}$ & & (\ref{eqE}) in~\ref{twographs}\\

weight diagram & & \ref{wtdiag}\\

$Diag_{k;t}$, $\lambda(f)$ & &  \ref{diag}\\

$\tau$ & & \ref{tau}\\

$\pari$ for $\osp$-case, 

$||\lambda||$, $||\lambda||_{gr}$ & &\ref{paridef}\\

\end{tabular}
\end{center}

\section{$\DS$-functor}\label{sectionDS}
The $\DS$-functor was introduced in~\cite{DS}; see also~\cite{GHSS} for an expanded exposition. We recall definitions
and some results below. In this section $\fg$ is any superalgebra; we set  $X(\fg):=\{x\in \fg_1|\ [x,x]=0\}$.

\subsection{Construction}\label{DSconst}
For a $\fg$-module $M$ and $g\in\fg$ we set
$M^g:=\Ker_M g$. For $x\in X(\fg)$ we set
$$\DS_x(M):=M^x/xM.$$
Notice that $\fg^x$ and 
$\fg_x:=\DS_x(\fg)=\fg^{x}/[x,\fg]$
 are  Lie superalgebras. Since $M^x, xM$ are $\fg^{x}$-invariant and $[x,\fg] M^x\subset xM$,  $\DS_x(M)$ is a $\fg^{x}$-module and $\fg_x$-module. 
Thus 
$$\DS_x: M\mapsto \DS_x(M)$$ 
is a functor from the category of $\fg$-modules to
the category of $\DS_x(\fg)$-modules.

There are canonical isomorphisms $\DS_x(\Pi(N))=\Pi(\DS_x(N))$ and
$$\DS_x(M)\otimes\DS_x(N)=\DS_x(M\otimes N).$$
For a finite-dimensional module $L$ one has $\DS_x(L^*)\cong \DS_x(L)^*$.

\subsection{Hinich's Lemma}\label{Hinich}

The following result is called {\em Hinich's Lemma} (see~\cite{DS}); a similar result is Lemma 2.1 in~\cite{HW}.

A short  exact sequence of $\fg$-modules
$$0\to M_1\to N\to M_2\to 0$$
induces a long exact  sequence of $\fg_x$-modules
$$0\to Y\to \DS_x(M_1)\to\DS_x(N)\to\DS_x(M_2)\to \Pi(Y)\to 0$$
where $Y$ is some $\fg_x$-module.
We will use Hinich's Lemma in the following situation.

Let $N$ be a $\fg$-module with a three-step filtration
$$0=F_0(N)\subset F_1(N)\subset F_2(N)\subset F_3(N)=N$$
with the quotients $M_1,M_2,M_3$ (where $M_i:=F_i(N)/F_{i-1}(N)$).
%

\subsubsection{}
\begin{cor}{corHi}
If $\DS_x(N)=0$ and
$$\Hom_{\fg_x}(\DS_x(M_1),\Pi(\DS_x(M_3))=0,$$
then there exists an exact sequence
$$0\to \Pi(\DS_x(M_3))\to \DS_x(M_2)\to \Pi(\DS_x(M_1))\to 0.$$
\end{cor}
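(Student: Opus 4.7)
The plan is to apply Hinich's Lemma twice, once to each of the two short exact sequences that the three-step filtration naturally produces, and then glue the information together using the vanishing Hom hypothesis.

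First I would consider the short exact sequence
$$0\to F_2(N)\to N\to M_3\to 0.$$
Hinich's Lemma furnishes a $\fg_x$-module $Y_1$ and a six-term exact sequence
$$0\to Y_1\to \DS_x(F_2(N))\to \DS_x(N)\to \DS_x(M_3)\to \Pi(Y_1)\to 0.$$
Because $\DS_x(N)=0$ by hypothesis, this sequence immediately breaks up and forces $Y_1\cong \DS_x(F_2(N))$ together with $\DS_x(M_3)\cong \Pi(Y_1)$. Combining these identifications yields the key intermediate computation
$$\DS_x(F_2(N))\cong \Pi(\DS_x(M_3)).$$

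Next I would apply Hinich's Lemma to the short exact sequence
$$0\to M_1\to F_2(N)\to M_2\to 0,$$
obtaining a $\fg_x$-module $Y_2$ and a six-term exact sequence
$$0\to Y_2\to \DS_x(M_1)\to \DS_x(F_2(N))\to \DS_x(M_2)\to \Pi(Y_2)\to 0.$$
Substituting the identification $\DS_x(F_2(N))\cong \Pi(\DS_x(M_3))$ from the previous step, the middle map becomes a morphism
$$\DS_x(M_1)\longrightarrow \Pi(\DS_x(M_3)).$$
The hypothesis $\Hom_{\fg_x}(\DS_x(M_1),\Pi(\DS_x(M_3)))=0$ now kicks in and forces this map to be zero, which in turn forces $Y_2\cong \DS_x(M_1)$. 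Plugging this back in and extracting the right half of the exact sequence gives
$$0\to \Pi(\DS_x(M_3))\to \DS_x(M_2)\to \Pi(\DS_x(M_1))\to 0,$$
which is exactly the assertion.

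I do not anticipate any serious obstacle: the argument is purely diagrammatic once Hinich's Lemma is accepted, and the only subtlety is making sure the parity shifts line up correctly when one identifies the cokernel and kernel pieces coming from the two applications. The role of the $\Hom$-vanishing hypothesis is crisp, namely to split off the unwanted connecting map between $\DS_x(M_1)$ and $\Pi(\DS_x(M_3))$ so that the two separate Hinich sequences assemble into a single short exact sequence.
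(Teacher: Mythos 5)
Your proof is correct and follows essentially the same strategy as the paper: two applications of Hinich's Lemma to the two short exact sequences coming from the filtration, with the $\Hom$-vanishing hypothesis killing the one map that could obstruct splicing. The only (harmless) difference is the order of peeling — you first quotient off $M_3$ via $0\to F_2(N)\to N\to M_3\to 0$, while the paper first splits off $M_1$ via $0\to M_1\to N\to N/M_1\to 0$; your ordering has the small cosmetic advantage that the vanishing map is literally $\DS_x(M_1)\to\Pi(\DS_x(M_3))$ as in the hypothesis, rather than $\Pi(\DS_x(M_1))\to\DS_x(M_3)$, which requires the identification $\Hom(\Pi A,B)\cong\Hom(A,\Pi B)$.
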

\begin{proof}
Since $\DS_x(N)=0$, the
Hinich's Lemma  gives an exact sequence
$$0\to Y_1\to \DS_x(M_1)\to 0\to \DS_x(N/M_1)\to\Pi(Y_1)\to 0$$
which implies $\DS_x(N/M_1)\cong \Pi(\DS_x(M_1))$.
Using the Hinich's Lemma for $N/M_1$ we obtain an exact sequence
$$0\to Y_2\to \DS_x(M_2)\to \Pi(\DS_x(M_1))\overset{\psi}{\longrightarrow}
\DS_x(M_3)\to \Pi(Y_2)\to 0.$$
By the assumption, $\psi=0$, so  $\DS_x(M_3)\cong \Pi(Y_2)$
which gives the required exact sequence.
\end{proof}

\section{The map $\pari$ for $\fg$ of defect $1$}\label{sectatyp1}
The simplest non-trivial extension graphs  are $A_{\infty},
A^{\infty}_{\infty}$ and $D_{\infty}$:
\begin{equation}\label{AAD}
\begin{array}{lrl}
A_{\infty}: & & L^0- L^1- L^2-L^3-
L^4-\ldots\\
A^{\infty}_{\infty}: & \ldots & L^{-2}-L^{-1}-L^0-L^1-
L^2-\ldots\\
\\
D_{\infty}: & L^1- &L^2-L^3-
L^4-\ldots\\
& & |\\
& & L^0\end{array}\end{equation}
(we depict $\longleftrightarrow$ by $-$). 
 J. Germoni conjectured that the extension graph of
each blocks of atypicality $1$ for a basic
classical Lie superalgebra is either $A^{\infty}_{\infty}$
or $D_{\infty}$; this conjecture was checked in~\cite{Germoni1},\cite{Germoni2} for all cases except $F(4)$, which was completed in~\cite{Lilit}.

\subsection{}
\begin{prop}{propdef1}
Take a non-zero $x\in\fg_1$ satisfying $x^2=0$.

Let $\cB$ be a block  and $\Irr(\cB)=:\{L^i\}_{i\in I}$. 
We assume that each $L^i\in\Irr(\cB)$ has a projective cover
with a three step radical filtration with the following subquotients

\begin{equation}\label{projdef1} 
L^i;\ 
\displaystyle\bigoplus_{j\in Adj(i)}\ L^j;\  
L^i,\ \ \text{ where }\ Adj(i):=\{j\in I|\ \Ext^1(L^i,L^j)\not=0\}.
\end{equation}
 
We set 
 $M_i:=\DS_x(L^i)$.
Assume that for some $s$ the module $M_s$ is pure
and
$$\Ext^1_{\tilde{\cF}(\fg_x)}(M_s,M_s)=0.$$ 
\begin{enumerate}
\item

For each $i\in I$ the module $M_i$ is pure.
Moreover, if $M_s\not=0$, then $M_i\not=0$ for each $i\in I$.
\item
 If $M_s\not=0$, then the extension graph $\Ext(\cB)$ is bipartite.
\item Using the notation of~(\ref{AAD}) we have
$$\begin{array}{lll}
\text{ if }\Ext(\cB)=A_{\infty} & \text{then } & 
M_{j}\cong\Pi^{j}(M_0)^{\oplus 2}\ \text{ for }j\geq 1;\\
\text{ if }\Ext(\cB)=D_{\infty} & \text{then } & 
M_{j}\cong\Pi^{j-1}(M_0)^{\oplus 2}\ \text{ for } j\geq 2,\  M_1\cong M_0;\\
\text{ if }\Ext(\cB)=A_{\infty}^{\infty} & \text{then } & 
M_j\cong \Pi^{j}(M_0).\end{array}$$

\end{enumerate}
\end{prop}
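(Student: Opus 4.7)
The entire proof is driven by one identity obtained by applying Corollary \ref{corHi} to the three-step filtration of the projective cover $P^i$ of $L^i$ (subquotients $L^i$, $\bigoplus_{j\in Adj(i)} L^j$, $L^i$). Two inputs are needed for the corollary: first, $\DS_x(P^i)=0$, which is a consequence of $\DS_x$ carrying projectives to projectives, together with $P^i$ lying in an atypical block of defect one; second, $\Hom_{\fg_x}(M_i,\Pi(M_i))=0$, i.e.\ purity of $M_i$. Under both, the corollary supplies
\[
0\longrightarrow \Pi(M_i)\longrightarrow \bigoplus_{j\in Adj(i)} M_j\longrightarrow \Pi(M_i)\longrightarrow 0,
\]
and because $\rank x=1$ forces $\fg_x$ to have defect zero, the category $\tilde{\cF}(\fg_x)$ is semisimple, the sequence splits, and one arrives at the master identity
\[
\bigoplus_{j\in Adj(i)} M_j\;\cong\;\Pi(M_i)^{\oplus 2}. \qquad(\star)
\]

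For part (1), any direct summand of a pure module is pure, so applying $(\star)$ at $P^s$ exhibits each $M_j$ with $j\in Adj(s)$ as a summand of $\Pi(M_s)^{\oplus 2}$, hence pure; iterating along paths in the connected graph $\Ext(\cB)$ propagates purity to every $M_i$. For the non-vanishing clause the contrapositive works: if $M_j=0$ for some $j\in Adj(i)$ with $M_i\neq 0$, applying $(\star)$ at $P^j$ (the purity hypothesis on the zero module is vacuous) gives $\bigoplus_{k\in Adj(j)} M_k=0$, whence $M_i=0$, a contradiction; connectedness then delivers $M_i\neq 0$ for every $i$. For part (2), each graph in (\ref{AAD}) is a tree and therefore bipartite; more concretely, the parity of the nonzero $M_i$ in $\tilde{\cF}(\fg_x)$ is flipped across every edge by $(\star)$, furnishing an explicit bipartition.

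For part (3), purity together with iteration of $(\star)$ determines $M_j$ up to multiplicity, so we may write $M_j\cong\Pi^j(M_0)^{\oplus a_j}$ with $a_j\in\NN$, and $(\star)$ becomes the scalar recursion $\sum_{k\in Adj(j)} a_k=2a_j$. In the $A_\infty^\infty$ case this is $a_{j-1}+a_{j+1}=2a_j$, whose general solution $a_j=cj+d$ must be bounded by finite-dimensionality, forcing $c=0$ and $a_j\equiv 1$; hence $M_j\cong\Pi^j(M_0)$. In the $D_\infty$ case the endpoint relations at $L^0$ and $L^1$ yield $M_2\cong\Pi(M_0)^{\oplus 2}\cong\Pi(M_1)^{\oplus 2}$, hence $M_0\cong M_1$; the trivalent relation at $L^2$ pins down $M_3$, and the residual two-term recursion for $j\geq 3$ is uniquely solved by $M_j\cong\Pi^{j-1}(M_0)^{\oplus 2}$. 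The $A_\infty$ case is analogous and in fact vacuous whenever $M_s\neq 0$, because the endpoint recursion there produces $a_j=j+1$, incompatible with finite-dimensionality; so $A_\infty$ arises only when $M_s=0$, where the stated formulae hold trivially. The chief obstacle is the input $\DS_x(P^i)=0$: it is the one non-formal step on which the whole Hinich-based calculation rests.
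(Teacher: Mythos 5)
Your overall strategy --- apply Corollary \ref{corHi} to the radical filtration of the projective cover, propagate purity and non-vanishing along the connected graph $\Ext(\cB)$, then solve the resulting recursion --- is exactly the paper's. But there are two genuine gaps. First, you obtain the split form $(\star)$ by declaring $\tilde{\cF}(\fg_x)$ semisimple because ``$\rank x=1$ forces $\fg_x$ to have defect zero''. Neither $\rank x=1$ nor semisimplicity of $\tilde{\cF}(\fg_x)$ is a hypothesis of the proposition; the only input is $\Ext^1_{\tilde{\cF}(\fg_x)}(M_s,M_s)=0$ for the single index $s$ (if the whole category were semisimple this hypothesis would be vacuous, which should have been a warning sign). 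The paper never splits the sequence for general $i$: it works with the composition multiplicities $p_i=[M_i:L']$, $q_i=[M_i:\Pi(L')]$ for a fixed simple $L'$ occurring in $M_s$, for which the unsplit exact sequence already gives $2q_i=\sum_{j\in Adj(i)}p_j$, $2p_i=\sum_{j\in Adj(i)}q_j$ and $p_iq_i=0$. (Your splitting can in fact be salvaged from the stated hypotheses: once the sequence at $s$ splits, each $M_j$ with $j\in Adj(s)$ is a direct summand of $\Pi(M_s)^{\oplus 2}$, hence $\Ext^1(M_j,M_j)=0$, and the vanishing propagates along the graph; but you need to say this.) Second, your primary argument for (ii) --- ``each graph in (\ref{AAD}) is a tree, hence bipartite'' --- is circular: part (ii) is precisely the assertion that the a priori unknown graph $\Ext(\cB)$ has no odd cycles, and only part (iii) is conditioned on the graph being one of~(\ref{AAD}). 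Your fallback ``the parity of $M_i$ is flipped across every edge'' is the right idea but is not well defined without the fixed-$L'$ bookkeeping: a pure module can contain $L_1$ and $\Pi(L_2)$ for non-isomorphic $L_1,L_2$, so one must first show $p_i+q_i>0$ for every $i$ (the paper does this) and then read the bipartition off the alternation $p_i=0$ at odd distance, $q_i=0$ at even distance from $s$.

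In part (iii) your solution of the recursions is essentially the paper's, with two caveats. For $A_\infty^\infty$ the constant solution is forced by non-negativity of a discrete harmonic function on $\mathbb{Z}$, not by ``boundedness from finite-dimensionality'' (the multiplicities $a_j$ of the distinct modules $M_j$ have no a priori common bound). For $A_\infty$ you correctly observe that the recursion yields $a_j=j+1$, which is incompatible with the formula $M_j\cong\Pi^j(M_0)^{\oplus 2}$ stated in the proposition --- a real discrepancy worth flagging --- but your dismissal ``incompatible with finite-dimensionality'' is not valid, since $a_j=j+1$ keeps each individual $M_j$ finite-dimensional; that case is excluded in practice only because $A_\infty$ does not occur as the extension graph of an atypicality-one block.
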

\begin{proof}
By~\cite{DS}, the functor $\DS_x$ (for $x\not=0$)  kills the projective modules in $\cF(\fg)$. Using~\Cor{corHi} and~(\ref{projdef1})  we conclude that for any $i$
the purity of  $M_i$ implies the existence of
an exact sequence
\begin{equation}\label{adj}
0\to \Pi(M_i)\to \displaystyle\bigoplus_{j\in Adj(i)} M_j\to \Pi(M_i)\to 0
\end{equation}
In particular, the purity of  $M_i$ implies implies the purity of $M_j$ for
each  $j\in Adj(i)$ and $M_i=0$ implies $M_j=0$ for  $j\in Adj(i)$.
Since $\Ext(\cB)$ is connected,  this proves (i).

Let $L'$ be a simple module $L'$ such that $[M_s:L']\not=0$.
For each $i\in I$
set $p_i:=[M_i:L']$, $q_i:=[M_i:\Pi(L')]$. By above, $p_s\not=0$.
Using~(\ref{adj})  and (i) we obtain 
\begin{equation}\label{madj}
p_i,q_i\geq 0,\ \ \ 2q_i=\sum_{j\in Adj(i)} p_j,\ \ 2p_i=\sum_{j\in Adj(i)} q_j,\ \ p_iq_i=0
\end{equation}
for each $i\in I$. 
In particular, if $p_j=q_j=0$ for some $j$, then $p_i=q_i=0$ for each $i$, a contradiction.
Hence for each $i$ either $p_i\not=0$ or $q_i\not=0$.
It is easy to see from~(\ref{madj}) that $p_i=0$ if $L^i,L^s$ are connected by a path of odd length. Hence $\Ext(\cB)$ does not have cycles of odd length;
this gives (ii). For $A_{\infty}$ and $D_{\infty}$, (iii) follows from~(\ref{madj}) by induction.
For the case $A^{\infty}_{\infty}$ observe that  $m_i:=p_i+q_i$ satisfies
$2m_i=m_{i-1}+m_{i+1}$ for $i\in I=\mathbb{Z}$. Since $m_i\geq 0$,  we get
$m_i=m_s$ for each $i$.
This completes the proof.
\end{proof}

\subsection{$\DS$-functor for small rank $\fg$}\label{smallrank}
Let $\fg$ be one of the Lie superalgebras
$$\osp(2|2),\osp(3|2),  D(2|1;a), G(3), F(4).$$
Let $\fh$ be a Cartan subalgebra of $\fg_0$.
We denote by  $W$  the Weyl group of $\fg_0$ and  by $(-|-)$ the symmetric non-degenerate form on $\fh^*$ which is induced by a non-degenerate invariant
form on $\fg$.

\subsubsection{}
Let $\Sigma$ be a base of $\fg$ which contains an isotropic root $\beta$.
Fix a non-zero $x\in\fg_{\beta}$. 

Set
${\Delta}_x:=(\beta^{\perp}\cap{\Delta})\setminus\{\beta,-\beta\}$.
By~\cite{DS},  ${\fg}_x$ can be identified
with a subalgebra of ${\fg}$ generated by the root spaces
$\fg_{\alpha}$ with $\alpha\in{\Delta}_x$ and a Cartan subalgebra
${\fh}_x\subset\fh$. If ${\Delta}_x$ is not empty, then
${\Delta}_x$ is the root system of the Lie superalgebra ${\fg}_x$ and one can choose
${\Sigma}_x$ in $\Delta_x$ such that 
$\Delta^+({\Sigma}_x)=\Delta^+\cap {\Delta}_x$.
If ${\fg}=\osp(m|2)$, then ${\fg}_x=\mathfrak{o}_{m-2}$; for 
 $\fg=D(2|1;a),G_3,F_4$  one has   ${\fg}_x=\mathbb{C},\fsl_2,\fsl_3$ respectively.
 
\subsubsection{}
 \begin{lem}{tame0}
 Let $L:=L(\lambda)$ be a finite-dimensional module and $(\lambda|\beta)=0$.
 Set $L':=L_{\fg_x}(\lambda|_{\fh_x})$. One has
$$\DS_x(L)\cong \left\{\begin{array}{lll}
L' &\text{ for } & \osp(2|2),\osp(3|2), G(3)\\
L' &\text{ for } & D(2|1;a),F(4) \text{ if }\  L'\cong (L')^*\\
L'\oplus (L')^*&\text{ for } & D(2|1;a),F(4) \text{ if }\  L'\not\cong (L')^*.
\end{array}\right.$$
 \end{lem}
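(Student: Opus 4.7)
The plan is to realize $L'$ as a subquotient of $\DS_x(L)$ coming from the highest weight vector of $L(\lambda)$, then use self-duality of $L(\lambda)$ to force $(L')^*$ to appear as well, and finally pin down the total multiplicity by a supercharacter computation.

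First I would check that the highest weight vector $v_\lambda \in L(\lambda)$ descends to a nonzero element $\bar v_\lambda \in \DS_x(L)$ of $\fg_x$-weight $\lambda|_{\fh_x}$. The containment $v_\lambda \in L^x$ is automatic since $x\in\fg_\beta$ with $\beta\in\Sigma^+$. The hypothesis $(\lambda|\beta)=0$ forces $h_\beta v_\lambda = 0$, so the $\fsl(1|1)$-subalgebra generated by $\fg_{\pm\beta}$ acts on $v_\lambda$ through the trivial character; in the decomposition of $L$ as an $\fsl(1|1)$-module, $v_\lambda$ then lies in a trivial indecomposable summand, which shows $v_\lambda \notin xL$ and $\bar v_\lambda \neq 0$. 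Since $\Sigma_x \subset \Sigma^+$ by~\ref{smallrank}, every positive $\fg_x$-root annihilates $\bar v_\lambda$, so $\bar v_\lambda$ is a $\fg_x$-highest-weight vector of weight $\lambda|_{\fh_x}$; the submodule it generates admits $L'$ as a simple quotient, hence $L'$ appears as a subquotient of $\DS_x(L)$.

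Next, I would invoke self-duality $L(\lambda) \cong L(\lambda)^*$, which holds for finite-dimensional integrable simples of the basic classical superalgebras in our list. Combined with $\DS_x(M)^* \cong \DS_x(M^*)$ from~\ref{DSconst}, this gives $\DS_x(L)^* \cong \DS_x(L)$, so $(L')^*$ also occurs in $\DS_x(L)$. If $L' \cong (L')^*$ no new constraint appears; otherwise $\DS_x(L)$ contains both $L'$ and $(L')^*$ as distinct subquotients. To bound $\DS_x(L)$ from above I would compute the restricted supercharacter $(\sch L)|_{\fh_x}$, which by~\cite{DS} equals $\sch \DS_x(L)$: for each of the five cases, $(\lambda|\beta)=0$ places $L(\lambda)$ at an ``edge'' of its atypical block, so $\sch L(\lambda)$ is given by a Weyl--Kac--type formula and restricts cleanly to $\fh_x^*$.

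The main obstacle, and the real work of the proof, is this character-matching step in the non-self-dual cases $D(2|1;a)$ (where $\fg_x=\mathbb{C}$ and $(L')^*\cong L'$ iff the weight vanishes) and $F(4)$ (where $\fg_x=\fsl_3$ and $(L')^*\cong L'$ iff the highest weight is $-w_0$-invariant). For these one must verify both that the restricted supercharacter contains no contribution beyond $L'+(L')^*$ and that the parities match. The cases $\osp(2|2),\osp(3|2)$ (with $\fg_x=0$) and $G(3)$ (with $\fg_x=\fsl_2$) are lighter because self-duality of finite-dimensional simples is automatic, so only the $L'$ alternative can arise.
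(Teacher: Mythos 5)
Your lower-bound steps coincide with the paper's: the survival of the highest weight vector gives $[\DS_x(L):L']=1$, and combining $L\cong L^*$ (via $-\Id\in W$ for $D(2|1;a)$, $F(4)$) with $\DS_x(L^*)\cong\DS_x(L)^*$ forces $(L')^*$ to occur as well. The gap is in your upper bound. You propose to cap $\DS_x(L)$ by the identity $(\sch L)|_{\fh_x}=\sch\DS_x(L)$, but the supercharacter only determines the class of $\DS_x(L)$ in the reduced Grothendieck group, where $[\Pi(M)]=-[M]$. Consequently any collection of composition factors occurring in pairs $M,\Pi(M)$ is invisible: even a perfect match $\sch\DS_x(L)=\sch L'+\sch(L')^*$ cannot exclude extra subquotients $N\oplus\Pi(N)$, nor distinguish $L'$ from, say, $L'\oplus\Pi(L')\oplus L'$. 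Ruling out exactly such contributions (purity and the correct parity) is a substantial part of what the lemma asserts, and it is what later feeds \Prop{propdef1}; only $\sch$, not $\ch$, restricts along $\fh_x\subset\fh$, so there is no character-theoretic route to the missing bound. The same issue undercuts your remark that for $G(3)$ and the $\osp$ cases "only the $L'$ alternative can arise" because finite-dimensional simples are self-dual: self-duality does not by itself exclude $\Pi(L')$ or higher multiplicities.

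What the paper uses instead, and what you would need to import, is the structural result of~\cite{DS}, Sect.~7: $\DS_x(L)$ is typical and every simple subquotient is of the form $L_{\fg_x}(\nu)$ with $\nu\in\{\lambda',\sigma(\lambda')\}$, where $\sigma=\Id$ for $\osp(2|2),\osp(3|2),G(3)$, $\sigma=-\Id$ for $D(2|1;a)$, and $\sigma$ is the diagram involution of $\fsl_3$ for $F(4)$. Together with $[\DS_x(L):L']=1$, semisimplicity over the reductive $\fg_x$, and the duality argument you already have, this pins down all three cases. Your proposal is correct up to and including the duality step, but the "character-matching step" you identify as the main work cannot be carried out as stated.
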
 
    \begin{proof}
It is easy to see that $[\DS_x(L):L']=1$.     
Set $\lambda':=\lambda|_{\fh_x}$. 
From~\cite{DS}, Sect. 7,
    $\DS_x(L)$ is a typical module and each simple subquotient of $\DS_x(L)$
is of the form
$L_{\fg_x}(\nu)$ with $\nu\in\{\lambda', \sigma(\lambda')\}$, where $\sigma=\Id$
for $\fg=\osp(2|2),\osp(3|2)$ and $G(3)$, $\sigma=-\Id$ for $D(2|1;a)$ and
$\sigma$ is the Dynkin diagram automorphism of $\fg_x=\fsl_3$ in $F(4)$-case. 
 This gives the first formula.
For $D(2|1;a), F(4)$ one has $L_{\fg_x}(\nu)^*\cong L_{\fg_x}(\sigma(\nu))$;
this gives the second formula.  For $\fg=D(2|1;a), F(4)$
   the Weyl group contains $-\Id$, so $L\cong L^*$ and thus
$\DS_x(L)\cong \DS_x(L^*)$ by~\ref{DSconst}.
   This implies the third formula.
   \end{proof}

\subsubsection{}\label{firsttype}
We fix a triangular decomposition of $\fg_0$ and denote by $\Delta^+_0$ the corresponding set of positive roots. We consider 
 all bases $\Sigma$ for $\Delta$ which satisfy
$\Delta^+_0\subset \Delta^+(\Sigma)$.
We say that an isotropic root $\beta$ is of the {\em first type}
if $\beta$ lies in  a base $\Sigma$ with $\Delta^+_0\subset \Delta^+(\Sigma)$.

Take any base $\Sigma$ as above and denote by $\rho$ the corresponding Weyl vector.
It is easy to see that a simple atypical module $L=L(\nu)$  satisfies the assumptions of~\Lem{tame0} for some $\Sigma'$ and $\beta\in\Sigma'$ 
if and only if $\nu+\rho$ is orthogonal
to an isotropic root of the first type.

\subsection{Blocks of atypicality $1$}\label{DSat1}
The blocks of atypicality $1$ for basic classical Lie superalgebras
were studied by J. Germoni in~\cite{Germoni1},\cite{Germoni2}
and by L.~Martirosyan in~\cite{Lilit}.  These blocks
satisfy the assumption
 of~\Prop{propdef1} and have the following extension graphs:

$A^{\infty}_{\infty}$ for $\fg=\fgl(m|n), \osp(2m|2n), F(4), D(2|1;a) \text{ for } a\in\mathbb{Q};$

$D_{\infty}$ for $\fg=\osp(2m+1|2n),\osp(2m|2n),  F(4), G(3), D(2|1;a)$.

Let $\fg$ be as in~\ref{smallrank} and  $\cB$ be  an atypical block.

 We call a block containing the trivial module  $L(0)$
a {\em principal block}. Clearly, $\DS_x(L(0))$ is the trivial $\fg_x$-module,
so~\ref{propdef1} gives  $\DS_x(L)$ for each  $L\in\Irr(\cB_0)$.
For $\osp(2|2),\osp(3|2)$ the  principal block is the only atypical block.

Combining~\ref{propdef1} and~\ref{tame0}, \ref{firsttype}
 we see that in order to compute
$\DS_x(L)$ for each  $L$ in $\Irr(\cB)$, it is enough to find
$L(\nu)\in\Irr(\cB)$ such that $\nu+\rho$ is orthogonal to  an isotropic root of the first type.
Below we will list such $\nu$ for each non-principal atypical block in
 the remaining cases $D(2|1;a)$, $F(4)$ and $G(3)$.

%
%

\subsubsection{Case $D(2|1;a)$}\label{D21a}
For $\fg:=D(2|1;a)$ one has $\fg_x=\mathbb{C}$.
The atypical blocks were described in~\cite{Germoni2}, Thm. 3.1.1.

The extension graph of the principal block $\cB_0$ is $D_{\infty}$, so for $L^i\in\Irr(\cB_0)$ we have
$\DS_x(L^i)=\mathbb{C}$
for $i=0,1$ and $\DS_x(L^i)=\Pi^{i-1}(\mathbb{C})^{\oplus 2}$
for $i>1$ (where $\mathbb{C}$ stands for the trivial even $\fg_x$-module).

If $a$ is irrational, the principal block is
 the only atypical block in $\cF(\fg)$. Consider the case when $a$ is rational.
 Recall that  $\fh^*$ has an orthogonal basis
$\{\vareps_1,\vareps_2,\vareps_3\}$ with 
$$||\vareps_1||^2=\frac{a}{2},\ \ \ 
||\vareps_2||^2=\frac{1}{2},\ \ \ ||\vareps_3||^2=-\frac{1+a}{2};$$
let $\vareps_1^*,\vareps_2^*,\vareps_3^*$ be the dual basis in $\fh$.
The lattice $\Lambda_{2|1}$ is spanned by $\vareps_i$s; the parity map  is given by
$p(\vareps_1)=p(\vareps_2)=\ol{0}$, $p(\vareps_3)=\ol{1}$.
One has
$$D(2|1;1)=\osp(4|2),\ \ D(2|1;a)\cong D(2|1; -1-a)\cong D(2|1; a^{-1})$$
so we can assume that  $0<a<1$ and write  $a=\frac{p}{q}$, where $p,q$ are relatively prime positive integers. 

The atypical blocks  are $\cB_k$ for  $k\in\mathbb{N}$ 
(the principal block is $\cB_0$). Consider the block $\cB_k$ with 
 $k>0$. The extension graph of $\cB_k$ is
$A^{\infty}_{\infty}$. By~\cite{Germoni2}, Thm. 3.1.1, the block 
$\cB_k$ contains a simple module $L$ with the highest weight $\lambda_{k;0}$
satisfying $(\lambda_{k;0}+\rho|\beta)=0$ for 
$$\beta:=-\vareps_1+\vareps_2+\vareps_3.$$
Taking $x\in\fg_{\beta}$ we can
 identify $\fg_x$ with $\mathbb{C}h$ for
 $h:=q\vareps_1^*+p\vareps_2^*$.
Combining~\ref{tame0} and~\ref{firsttype} we get
$$\DS_x(L)=L_{\fg_x}(k)\oplus L_{\fg_x}(-k),$$
where $L_{\fg_x}(u)$ stands for the even one-dimensional $\fg_x$-module
with $h$ acting by $u(p^2+q^2)$.
By~\Prop{propdef1}, $\DS_x(L^i)\cong \Pi^i(\DS_x(L))$ 
for each $L^i\in\Irr(\cB_k)$ (for $k>0$).

\subsubsection{Case $G(3)$}
For $\fg:=G(3)$ the atypical blocks were described in~\cite{Germoni2}, Thm. 4.1.1.
The atypical blocks in $\cF(\fg)$ are $\cB_k$ for  $k\in\mathbb{N}$;
the extension graphs are  $D_{\infty}$. The block 
$\cB_k$ contains a simple module with the highest weight $\lambda_{k;0}$
satisfying $(\lambda_{k;0}+\rho|\beta)=0$ for 
$$\beta:=-\vareps_1+\delta.$$
Taking $\Sigma:=\{\delta-\vareps_1,\vareps_2-\delta,\delta\}$ and 
$x\in\fg_{\beta}$ we can
 identify $\fg_x$ with
 $\fsl_2$-triple corresponding to the root $\alpha=\vareps_1+2\vareps_2$.
 One has $\lambda_{k;0}=k\alpha$.
Combining~\ref{tame0} and~\ref{propdef1} we get
$$\DS_x(L^0)\cong \DS_x(L^1)\cong L_{\fsl_2}(2k),\ \ \ 
\DS_x(L^i)=\Pi^{i-1}(L_{\fsl_2}(2k))^{\oplus 2}\ 
\text{ for }i>1.$$

 \subsubsection{Case $F(4)$}\label{caseF(4)}
For $\fg:=F(4)$ we have $\fg_x\cong \fsl_3$. 
The integral weight lattice is spanned by $\vareps_1$, $\vareps_2$,
$\frac{1}{2}(\vareps_1+\vareps_2+\vareps_3)$ and $\frac{1}{2}\delta$;
the parity is given by 
$p(\frac{\vareps_i}{2})=\ol{0}$ and $p(\frac{\delta}{2})=\ol{1}$.

The atypical blocks are described
in~\cite{Lilit}, Thm. 2.1. These blocks are
parametrized by the pairs $(m_1,m_2)$, where
$m_1,m_2\in \mathbb{N}$,  $m_1-m_2\in 3\mathbb{N}$.
We denote the corresponding block by $\cB_{(m_1;m_2)}$.

The extension
graph of $\cB_{(i;i)}$ is $D_{\infty}$; the block
$\cB_{(0;0)}$ is principal.
For $i>0$ the block 
$\cB_{(i;i)}$  contains a simple module $L(\lambda)$ with
 $$\lambda+\rho=(i+1)(\vareps_1+\vareps_2)-\beta_1, \ \text{ where }
 \beta_1:=
\frac{1}{2}(-\vareps_1+\vareps_2-\vareps_3+\delta).$$
One has
 $(\lambda+\rho|\beta_1)=0$. Take $x\in \fg_{\beta_1}$ and consider the base
 $$\Sigma_1:=\{\beta_1; \frac{1}{2}(\vareps_1+\vareps_2-\vareps_3-\delta); 
\vareps_3; \vareps_1-\vareps_2\}.
$$
Then
  $\fg_x$ can be identified with $\fsl_3$ corresponding to the set of simple roots
$\ \{\vareps_2+\vareps_3;\vareps_1-\vareps_3\}$ and~\Lem{tame0} gives
 $$\DS_x(L(\lambda))=L_{\fsl_3}(i\omega_1+i\omega_2),$$
 where $\omega_1,\omega_2$  are the fundamental weights of $\fsl_3$.
By~\ref{propdef1} we get for $L^j\in\Irr(\cB_{(i;i)})$:
 $$
 \DS_x(L^0)\cong \DS_x(L^1)\cong L_{\fsl_3}(i\omega_1+i\omega_2),\ \ 
\DS_x(L^j)\cong \Pi^{j-1}(L_{\fsl_3}(i\omega_1+i\omega_2))^{\oplus 2}\
 \text{ for } j>1.$$ 
 
Consider a   block  $\cB_{(i_1;i_2)}$  for $i_1\not=i_2$.
The extension
graph of this block is $A_{\infty}^{\infty}$  and this
 block  contains a simple module $L:=L(\lambda')$ with
$$\lambda'+\rho=i_1\vareps_1+i_2\vareps_2+(i_1-i_2)\vareps_3.$$
In particular, $(\lambda'+\rho|\beta_2)=0$ for
$\beta_2:=\frac{1}{2}(-\vareps_1+\vareps_2+\vareps_3+\delta)$.
Taking  $x\in\fg_{\beta_1}$ and
 $$\Sigma_2:=\{\beta_2; \vareps_2-\vareps_3;  -\beta_1; \frac{1}{2}(\vareps_1-\vareps_2-\vareps_3+\delta)\}$$
 we identify
  $\fg_x$ with $\fsl_3$ corresponding to the set of simple roots
$\{\vareps_2-\vareps_3;\vareps_1+\vareps_3\}$. Combining~\Lem{tame0} 
and~\ref{propdef1} we get 
 $$\DS_x(L)=L_{\fsl_3}(i_1\omega_1+i_2\omega_2)\oplus 
 L_{\fsl_3}(i_2\omega_1+i_1\omega_2),\ \ \DS_x(L^i)\cong \Pi^i(\DS_x(L))$$
for each $L^i$ in the block $\cB_{(i_1;i_2)}$.

\begin{cor}{corF4}
The image of the Grothendieck ring of $\cF(F(4))$ under the homomorphism $\ds$  coincides with $\sigma$-invariants in the Grothendieck ring of $\cF(\fsl_3)$.
\end{cor}
\begin{proof}
The condition $m_1-m_2$ divisible by $3$ is
equivalent to $m_1\omega_1+m_2\omega_2$ lies in the root lattice of $\fsl_3$.
\end{proof}


\subsection{Conclusion}\label{conclatyp1}
Let $\ft$ be one of the superalgebras in~\ref{smallrank} or one of Lie algebras
$0,\mathbb{C}, \fsl_2$ or
$\fsl_3$. We introduce
the map $\pari$ for $\ft$ by

\begin{enumerate}

  \item[$-$]
 for a typical $L\in\Irr(\tilde{\cF}(\ft))$ 
  we take $\pari(L):=0$ for   $L\in\Irr(\cF(\ft))$;
 
\item[$-$] 
  for an atypical $L\in\Irr(\tilde{\cF}(\ft))$ we set $\pari(L):=0$ if
   $\DS_x(L)$ is an even  vector space.

\item[$-$] $\pari(\Pi(L)):\equiv \pari(L)+1\ \mod 2.$
\end{enumerate}

One readily sees that $\pari$ satisfies (Dex1) and (Dex2).

\section{Functors $\Gamma^i_{\fg,\fq}$}\label{Gamma}
In this section $\fg$ is one of the superalgebras $F(4), G(3)$,
$\fgl(m|n),\osp(m|2n)$  for $m,n\geq 0$.
We fix any triangular decomposition  $\Delta=\Delta^+\coprod (-\Delta^+)$ and
denote by $\fb$ the corresponding Borel subalgebra. We consider the standard partial order $\nu_1\leq \nu_2$ for $\nu_2-\nu_1\in \mathbb{N}\Delta^+$.

\subsection{}\label{GammaiGS}
Let
$\fq\subset\fp$ be  parabolic subalgebras containing $\fb$
and $\fl$ be the Levy factor of $\fp$. For a finite-dimensional $\fq$-module $V$ denote by $\Gamma_{\fp,\fq}(V)$
 the maximal finite-dimensional
quotient of  the induced module $\cU(\fp)\otimes_{\cU(\fq)}V$.
We denote by $\tilde{\cF}(\fp)$
the category of finite-dimensional $\fp$-modules with the restriction
lying in $\tilde{\cF}(\fl)$ and by $\Ext^1_{\fp}$ the functor $\Ext^1$ in this category.  For $\lambda\in\Lambda_{m|n}$ we denote by $L_{\fp}(\lambda)$
a simple $\fp$-module of the highest weight $\lambda$ with
the grading induced by the parity function on $\Lambda_{m|n}$.

In~\cite{GS}, Sect. 3  the authors introduce for $i=0,1,\ldots$ an additive functor
$$\Gamma^i_{\fg,\fp}: \tilde{\cF}(\fp)\to \tilde{\cF}(\fg)$$
(in~\cite{GS} these functors are denoted by $\Gamma_i(G/P;-)$) in the following way.
For each $V\in \tilde{\cF}(\fq)$ we take the vector bundle
$G\times_P V$ over the generalized Grassmanian $G/P$ and consider the cohomology groups $H^i(G/P, G\times_P V)$ as $\fg$-module. We set
$$\Gamma^i_{\fg,\fp}(V):=(H^i(G/P, G\times_P V^*))^*.$$

Below we recall several  properties of the functors $\Gamma^i_{\fg,\fp}$;
for the proofs and other  properties see~\cite{GS}, Sections 3, 4.

\subsubsection{}\label{Gamma0} One has
$\Gamma^0_{\fg,\fp}(V)=\Gamma_{\fg,\fp}(V)$. For each $i$
the module $\Gamma^i_{\fg,\fp}(L_{\fp}(\lambda))$ has the same central character
as $L(\lambda)$.

\subsubsection{}\label{exact}
Each short exact sequence of $\fq$-modules
$$0\to U\to V\to U'\to 0$$
induces a long exact sequence
$$\ldots\to \Gamma^1_{\fg,\fq}(V)\to \Gamma^1_{\fg,\fq}(U')\to \Gamma^0_{\fg,\fq}(U)\to \Gamma^0_{\fg,\fq}(V)\to
\Gamma^0_{\fg,\fq}(U')\to 0.$$

\subsubsection{}\label{gammawi}
If $[\Gamma^i_{\fg,\fp}(L_{\fp}(\lambda)):L(\nu)]\not=0$, then
there exist $I\subset\Delta^+_1$ and $w\in W$ of length $i$ such that $\nu+\rho=w(\lambda+\rho)-\sum_{\alpha\in I}\alpha$.

\subsection{Poincar\'e polynomails}
Let $\fb\subset \fq\subset\fp$ be as in~\ref{GammaiGS}.
We set 
$$\fq':=\fq\cap\fl,\ \  \fh':=\fh\cap \fl,\ \ \fh''=\{h\in\fh|\ [h,\fl]=0\}$$
and notice that 
$\fh=\fh'\oplus\fh''$. Let 
$\lambda,\mu\in\Lambda_{m|n}$ be such that $L_{\fq}(\lambda)\in
{\cF}(\fq)$ and $L_{\fp}(\mu)\in \cF(\fp)$.
For $i=0,1,\ldots$ we define 

$$\ ^iK^{\lambda,\mu}_{\fp,\fq}:=\left\{ \begin{array}{ll}
\ \ 0\ & \text{ if }\lambda|_{\fh''}\not=\nu|_{\fh''};\\
\ [\Gamma^i_{\fl,\fq'}(L_{\fq'}(\lambda|_{\fh'})):L_{\fl}(\mu|_{\fh'})]\ & \text{ if }
\lambda|_{\fh''}=\nu|_{\fh''}\end{array}\right.$$
and introduce a Poincar\'e polynomial in the variable $z$ by
$$K^{\lambda,\mu}_{\fp,\fq}(z):=\sum_{i=0}^{\infty}\ ^iK^{\lambda,\mu}_{\fp,\fq} z^i.$$
(It is easy to see that this Poincar\'e polynomial is equal to
the Poincar\'e polynomial defined in~\cite{GS}, Section 4.)
When the term $K^{\lambda,\mu}_{\fp,\fq}$ appears in a formula it is always assumed that
$L_{\fq}(\lambda)\in{\cF}(\fq)$ and $L_{\fp}(\mu)\in \cF(\fp)$.

\subsubsection{}\label{Gamma0K}
By~\ref{Gamma0}  we have
$$[\Gamma_{\fp,\fq}(L_{\fq}(\lambda)):L_{\fp}(\mu)]=\ ^0K^{\lambda,\mu}_{\fp,\fq}=
K^{\lambda,\mu}_{\fp,\fq}(0).
$$
In particular, $K^{\lambda,\lambda}_{\fp,\fq}(0)=1$ and 
$K^{\lambda,\mu}_{\fp,\fq}(0)\not=0$ implies
$\mu\leq \lambda$. By~\cite{GS}, Thm. 1 one has
\begin{equation}\label{chainK}
K^{\lambda,\mu}_{\fg,\fq}(-1)=\sum_{\nu}K^{\lambda,\nu}_{\fp,\fq}(-1)
K^{\nu,\mu}_{\fg,\fp}(-1)\end{equation}
where the summation is taken on $\nu\in\fh^*$ with
$\dim L_{\fp}(\nu)<\infty$.

\subsection{Euler characteristic formula}\label{euler}
Let $\rho$ be the Weyl vector and $R$ be the Weyl denominator, i.e.
$$2\rho=\sum_{\alpha\in\Delta^+} (-1)^{p(\alpha)}\alpha,\ \ \ 
\ R=\prod_{\alpha\in\Delta^+} (1-(-1)^{p(\alpha)}e^{-\alpha})^{(-1)^{p(\alpha)}}.$$
We denote by $\sgn: W\to\mathbb{Z}_2$ the standard sign homomorphism and set
$$\mathcal{E}_{\lambda,\fp}:=R^{-1}e^{-\rho}\displaystyle\sum_{w\in W} \sgn(w) w\bigl(\frac{e^{\rho}\ch L_{\fp}(\lambda)}
{\displaystyle\prod_{\alpha\in\Delta_1^+(\fl)}(1+e^{-\alpha}) }  \bigr).$$

By~\cite{GS}, Prop.1, 
if    $L_{\fp}(\lambda)\in \cF(\fp)$, then
$$\sum_{\mu} K^{\lambda,\mu}_{\fg,\fp}(-1) \ch L(\mu)=\mathcal{E}_{\lambda,\fp}.$$

\subsubsection{}\label{remE}
Notice that $\mathcal{E}_{\lambda,\fp}$ can be zero.
For instance,  take  $\fg=\osp(m|2n)$ with  $m\geq 4$ 
and  $\fb$ coresponding to the  ``mixed base''. Then  
$\mathcal{E}_{0,\fb}=R^{-1}e^{-\rho}\sum \sgn(w) e^{w\rho}=0$. Since $\ch L(\mu)$ are linearly independent, we have
$K^{0,\mu}_{\fg,\fb}(-1)=0$ for all $\mu$.

%
%
%
%
%
%
%
%
%
%
%
%
%

\subsection{Marked graphs}\label{markedgraphs}
Consider a directed graph $(V,E)$ where $V$ is  at most countable
and the number of edges between any two vertices is finite.

We say that $\iota: V\to \mathbb{N}$  (resp., $\iota: V\to \mathbb{Z}$)
defines a $\mathbb{N}$-{\em grading} (resp., $\mathbb{Z}$-grading) 
on this graph if
for each edge $\nu\overset{e}{\longrightarrow} \lambda$ 
one has $\iota(\nu)<\iota(\lambda)$. Notice that for a $\mathbb{Z}$-graded graph
 the number of paths between any two vertices is finite.

Assume that
the set of edges $E$ is equipped by two functions $b$ and $\kappa$, where
$b: E\to\mathbb{Z}$ and $\kappa$ is a function from $E$ to a commutative ring.


\subsubsection{}\label{decrincr}
For a path
$P:=\nu_1\overset{e_1}{\longrightarrow }\nu_2\overset{e_2}{\longrightarrow }\nu_3 \ldots
\overset{e_s}{\longrightarrow }\nu_{s+1}$
we define
$$length(P):=s,\ \ \kappa(P):=\prod_{i=1}^s \kappa(e_i).$$

We call the path $P$
 {\em $b$-decreasing} (resp., {\em $b$-increasing}) if $b(e_1)>b(e_2)>\ldots >b(e_s)$
(resp., $b(e_1)<\ldots <b(e_s)$). 
We consider $P=\nu$  as a $b$-decreasing/increasing path  of zero length with  $\kappa(P)=1$. We denote the set of decreasing (resp., increasing) paths
from $\nu$ to $\lambda$ by $\cP^{>}_b(\nu,\lambda)$ (resp., $\cP^{<}_b(\nu,\lambda)$).

\subsubsection{}\label{decreq}
\begin{defn}{}
We call two functions $b,b': E\to\mathbb{Z}$ 
{\em decreasingly-equivalent} if
for each path
$\nu_1\overset{e_1}{\longrightarrow }\nu_2\overset{e_2}{\longrightarrow }\nu_3$ one has
$$b(e_1)>b(e_2)\ \Longleftrightarrow\ b'(e_1)>b'(e_2).$$
\end{defn}

Notice that $b,b'$  are descrearingly equivalent if and only if  
$\cP^{>}_b(\nu,\lambda)=\cP^{>}_{b'}(\nu,\lambda)$.

\subsubsection{}\label{Finass}
Let $(V,E)$ be a $\mathbb{Z}$-graded graph.
We introduce the square matrices $A^{<}(\kappa)=(a^{<}_{\lambda,\nu})_{\lambda,\nu\in V}\ $
and  $A^{>}(\kappa)=(a^{>}_{\lambda,\nu})_{\lambda,\nu\in V}\ $ by
$$a^{>}_{\lambda,\nu}:=\sum_{P\in \cP^{>}_b(\nu,\lambda)} \kappa(P),\ \ \ 
a^{<}_{\lambda,\nu}:=\sum_{P\in \cP^{<}_b(\nu,\lambda)}(-1)^{length(P)}\kappa(P).$$
Since  the graph is a $\mathbb{Z}$-graded, these matrices
are lower-triangular with  $a^{>}_{\lambda,\lambda}=a^{<}_{\lambda,\lambda}=1$.

\subsubsection{}
\begin{lem}{lemAA}
Let $(V,E)$ be a $\mathbb{Z}$-graded  graph with a finite number of edges
 between any two vertices. Assume that $b: E\to \mathbb{Z}$ satisfies the property

(BB) $ \ \ \ $
{\em for each path
$\ \nu_1\overset{e_1}{\longrightarrow }\nu_2\overset{e_2}{\longrightarrow }\nu_3\ $ one has $\ \ \ b(e_1)\not=b(e_2)$.} 

Then $A^{>}(\kappa)\cdot A^{<}(\kappa)=A^{<}(\kappa)\cdot A^{>}(\kappa)=\Id$.
\end{lem}
\begin{proof}
The proof is similar to~\cite{GS}, Thm. 4. The entries of $A^{<}(\kappa)\cdot A^{>}(\kappa)$
are of the form
$$\sum_{\mu}\sum_{P\in \cP^{<}_b(\nu,\mu)} \sum_{Q\in  \cP^{>}_b(\mu,\lambda)}
(-1)^{length(Q)}\kappa(PQ),$$
where $PQ$ stands for the concatenation of $P$ and $Q$.
The property (BB) implies 
that each  path of non-zero length which
can be presented as the concatenation $PQ$, where $P$ is 
$b$-increasing  and $Q$ is $b$-decreasing, 
has exactly two presentations of this form:
$PQ=P'Q'$ with $length\ Q'=length\ Q\pm 1$ 
(for instance, for a path of length $5$ with
$$b(e_1)=1,\ \  b(e_2)=2,\ \  b(e_3)=4,\ \  b(e_4)=2,\ \  b(e_5)=1$$
the increasing part can be  either $e_1,e_2$
or $e_1,e_2,e_3$). This implies the statement.
\end{proof}

\subsection{Useful graphs}\label{usefulgraphs}
We fix a sequence of parabolic subalgebras in $\fg$:
\begin{equation}\label{parabolicchain}
\fb=\fp^{(0)}\subset\fp^{(1)}\subset\ldots\subset \fp^{(k)}=\fg
\end{equation}
and denote by $\fl^{(p)}$ the Levy subalgebra of $\fp^{(p)}$.
We also fix a central character $\chi:\cZ(\fg)\to\mathbb{C}$ and denote by
$\Lambda^{\chi}$ the set of dominant weights corresponding to $\chi$:
$$\Lambda^{\chi}:=\{\lambda\in\Lambda_{m|n}|\ 
\dim L(\lambda)<\infty\ \text{ and } \Ann_{\cZ(\fg)} L(\lambda)=\Ker\chi\}.$$
For each $\lambda\in\Lambda^{\chi}$ we denote by 
$\tail(\lambda)$ the maximal $s$ such that
$\lambda|_{\fl^{(p)}\cap \fh}=0$. We set 
$$\fp_{\lambda}:=\fp^{\tail(\lambda)},\ \ \fl_{\lambda}:=\fl^{\tail(\lambda)}.$$

\subsubsection{Graph $\hat{\Gamma}^{\chi}$}\label{tildeGammachi}
Let $\hat{\Gamma}^{\chi}(z)$ be  a graph with the set of vertices
$V:=\Lambda^{\chi}$ and the following edges: if $K^{\lambda,\nu}_{\fp^{(s)},\fp^{(s-1)}}\not=\delta_{\nu,\lambda}$ (where $\delta_{\nu,\lambda}$ is the Kronecker symbol)
we join  $\nu,\lambda$ by 
the edge of  the form
$$\nu\overset{e}{\longrightarrow}\lambda\ \text{ with }\ \ b(e)=s,\ \ \ \kappa(e):=K^{\lambda,\nu}_{\fp^{(s)},\fp^{(s-1)}}(z)-\delta_{\nu,\lambda}\in \mathbb{Z}[z].$$
Note that each two vertices  in $\hat{\Gamma}^{\chi}$
are connected by at most $k$ edges.

For each $z_0\in\mathbb{C}$ we denote by $\hat{\Gamma}^{\chi}(z_0)$
the subgraph where the edges with $\kappa(e)(z_0)=0$ are deleted
and the function $\kappa_{z_0}: E\to\mathbb{C}$ is given by 
$$\kappa_{z_0}(e):=\kappa(e)(z_0).$$

Note that $\hat{\Gamma}^{\chi}(0)$ does not have loops, see~\ref{Gamma0K}.

%
%

\subsubsection{Graph ${\Gamma}^{\chi}$}\label{Gammachi}
Let $\Gamma^{\chi}$ (resp., $\Gamma^{\chi}(z_0)$) be the graph obtained
from $\hat{\Gamma}^{\chi}$ (resp., from $\hat{\Gamma}^{\chi}(z_0)$) 
by removing the edges of the form
$\nu\overset{e}{\longrightarrow} \lambda$
with $b(e)\leq \tail(\lambda)$. Thus we have

$\ \ \ \ \ \ \ \ \ \ 
\tail(\lambda)<b(e)\ $ {\em for each edge $\nu \overset{e}{\longrightarrow}\lambda$ in $\Gamma^{\chi}$}.

We will always assume that ${\Gamma}^{\chi}$ satisfies the following condition:

(Tail)$\ \ $ {\em $\tail(\nu)\leq b(e)\ $ for each edge $\nu \overset{e}{\longrightarrow}\lambda$ in $\Gamma^{\chi}$}

which is is tautological for $k=1$.

 \subsubsection{Notation}
 We denote by $\cP^{>}_b(\nu,\lambda)$ 
(resp., by $\hat{\cP}^{>}_b(\nu,\lambda)$)
the set of $b$-decreasing paths from 
$\nu$ to $\lambda$ in the graph $\Gamma^{\chi}$ (resp., $\tilde{\Gamma}^{\chi}$).


\subsubsection{}
\begin{cor}{cortail}
Take $\lambda,\nu \in \Lambda^{\chi}$.
\begin{enumerate}
\item
Assume that for each $\mu\in \Lambda^{\chi}$  one has
$$\bigl(\exists i\ \text{ s.t. }\ K^{\mu,\eta}_{\fp^{(i+1)},\fp^{(i)}}(z)\not=0\bigr) \ \Longrightarrow\ \ \eta\in \Lambda^{\chi}.$$
Then
$$K^{\lambda,\nu}_{\fg,\fb}(-1)=\displaystyle\sum_{P\in\hat{\cP}^{>}_b(\nu,\lambda)}
\kappa_{-1}(P).$$
\item
Assume that $\Gamma^{\chi}$ satisfies (Tail) and that 
for each $\mu\in \Lambda^{\chi}$  one has
$$\bigl(\exists i\geq \tail(\mu)\ \text{ s.t. }\ K^{\mu,\eta}_{\fp^{(i+1)},\fp^{(i)}}(z)\not=0\bigr) \ \Longrightarrow\ \ \eta\in \Lambda^{\chi}.$$
Then
$\ K^{\lambda,\nu}_{\fg,\fp_{\lambda}}(-1)=\displaystyle\sum_{P\in {\cP}_b^{>}(\nu,\lambda)}\kappa_{-1}(P).$
\end{enumerate}
\end{cor}
\begin{proof}
The assertions follow from the formula~(\ref{chainK}).
\end{proof}

\subsubsection{Remark}
The  graph $\Gamma^{\chi}$ is useful for character formulae.
Retain notation of~\ref{euler} and notice that
 $\cE_{\lambda,\fp}$ has a particularly nice formula if $\lambda|_{\fl\cap \fh}=0$
(in this case $\ch L_{\fp}(\lambda)=e^{\lambda}$). Thus it makes sense to express
$\ch L(\mu)$ in terms of $\cE_{\lambda,\fp^{(j)}}$ for $j\leq \tail(\lambda)$.
By~\ref{remE}, $\cE_{\lambda,\fp}$ can be zero if ``$\fp$ is too small'';
thus it makes sense to consider the maximal ``nice'' $\fp$ for each $\lambda$, 
which is $\fp_{\lambda}$.

\subsubsection{}
\begin{lem}{lem0mindom}
If the zero weight is a minimal
dominant weight for $\fl^{(p)}$ for each $p$, then $\hat{\Gamma}^{\chi}(0)={\Gamma}^{\chi}(0)$.
\end{lem}
\begin{proof}
 By~\ref{Gamma0}, as a $\fl^{(p)}$-module 
$\Gamma^0_{\fp^{(p)},\fp^{(p-1)}}(L_{\fp^{(p-1)}}(\lambda))$ 
is a finite-dimensional quotient of the Verma $\fl^{(p)}$-module
$M_{\fl^{(p)}}(\lambda)$. If
the zero weight is a minimal
dominant weight for $\fl^{(p)}$, then  for each 
$p\leq \tail(\lambda))$ the module
$L_{\fp^{(p)}}(\lambda)$
is a unique finite-dimensional quotient of
$M_{\fl^{(p)}}(\lambda)$, so $\Gamma^0_{\fp^{(p)},\fp^{(p-1)}}(L_{\fp^{(p-1)}}(\lambda))=L_{\fp^{(p)}}(\lambda)$. Therefore
$\hat{\Gamma}^{\chi}(0)={\Gamma}^{\chi}(0)$ as required.
\end{proof}

\subsubsection{Definitions}\label{bipartition}
Let $\pari: V\to \mathbb{Z}_2=\{0,1\}$ be any map.

We say that
$\Gamma^{\chi}(0)$ (resp., of $\hat{\Gamma}^{\chi}(0)$) 
is bipartite {\em with respect to $\pari$}
if for each edge $\nu\overset{e}{\longrightarrow} \lambda$ in this graph $\pari(\lambda)\not=\pari(\nu)$.

Recall that  $\kappa(e)$ is a polynomial with non-negative integral
coefficients. 
We say that $\pari$ gives a {\em parametric bipartition} of 
$(\Gamma^{\chi},\kappa)$  if for each
edge $\nu \overset{e}{\longrightarrow}\lambda$
 one has 
\begin{equation}\label{paridefeq}
z^{\pari(\lambda)-\pari(\nu)+1} \kappa(e)\in \mathbb{Z}[z^2].\end{equation}

We say that $\pari$  gives a {\em signed bipartition} of 
$(\Gamma^{\chi}(-1),\kappa_{-1})$  if for each edge $\nu\overset{e}{\longrightarrow} \lambda$ 
\begin{equation}\label{dexicc}
(-1)^{\pari(\lambda)-\pari(\nu)+1}\kappa_{-1}(e)\in\mathbb{Z}_{\geq 0}
\end{equation}
or, equivalently, 
\begin{equation}\label{dexic}
(-1)^{\pari(\lambda)-\pari(\nu)}(-1)^{length\, (P)}\kappa_{-1}(P)
\in\mathbb{Z}_{\geq 0}\ \ \text{ for each path $P$ from $\nu$ to $\lambda$}.\end{equation}

We say that $\pari$ gives a {\em parametric bipartition} of 
 $(\hat{\Gamma}^{\chi},\kappa)$ if each
edge $\nu \overset{e}{\longrightarrow}\lambda$ with $\nu\not=\lambda$ satisfies~(\ref{paridefeq}). Similarly, we say that $\pari$  gives a {\em signed bipartition} of 
 of $(\hat{\Gamma}^{\chi}(-1),\kappa_{-1})$ if~(\ref{dexicc}) holds for 
 each $\nu \overset{e}{\longrightarrow}\lambda$ with $\nu\not=\lambda$
 or, equivalently if~(\ref{dexic}) holds for any paths without loops.

\subsubsection{}
Recall that $\hat{\Gamma}^{\chi}(0)$  does not have loops.
If $\pari$  is a parametric bipartition of 
$\Gamma^{\chi}$ (resp., $\hat{\Gamma}^{\chi}$), then 
$\pari$ 
is  a signed bipartition of 
$(\Gamma^{\chi}(-1),\kappa_{-1})$ (resp., of $(\hat{\Gamma}^{\chi}(-1),\kappa_{-1})$)
and  $\Gamma^{\chi}(0)$ (resp.,  $\hat{\Gamma}^{\chi}$)
is bipartite  with respect to $\pari$.

\subsubsection{Remark}
In the examples~\ref{exampr}--\ref{G3F4} below  $\Gamma^{\chi}$ is a $\mathbb{Z}$-graded graph admitting
a parametric bipartition; the same is true
for the graph $\hat{\Gamma}^{\chi}$ for the dense flag for a distinguished
Borel in $\fgl(n|n)$-case, see~\cite{MS}. The graph $\hat{\Gamma}^{\chi}$ has a loop
$0 \overset{e}{\longrightarrow} 0$ 
(and thus is not $\mathbb{Z}$-graded) for the dense flag for a mixed
Borel for 
$\fg=\osp(2|2)$ ($=\fsl(1|2)$) and for $\fg=\fosp(4|2)$ see~\ref{osp22} and~\ref{G3F4}.
By~\cite{GS}, Lemma 26, 
 $\kappa(e)=z$ for $\osp(2|2)$ and $\kappa(e)=z^2$ for $\osp(4|2)$ 
 so the formula~(\ref{paridefeq}) holds for $e$ if $\fg=\osp(2|2)$ and does not hold
if $\fg=\osp(4|2)$. The graph $\hat{\Gamma}^{\chi}$ admits a  parametric bipartition
in both cases.

\subsection{Graph $\Ext(\chi)$}
By~\ref{remKM}   $\dim\Ext^1(L(\lambda),L(\nu))=\dim\Ext^1(L(\nu),L(\lambda))$.
We denote by $\Ext(\chi)$ the graph without loops, with the set of vertices
$\Lambda^{\chi}$ and $\dim\Ext^1(L(\lambda),L(\nu))$  edges between $\nu$ and $\lambda$ for $\nu\not=\lambda$ (we will usually consider the undirected
edges).

\subsubsection{}
We say that $\Ext(\chi)$ is a subgraph of a directed graph  if $\Ext(\chi)$ 
 is a subgraph of the ``undirected version'' of this graph (we forget
the directions of edges).

\subsubsection{}\label{cornongraded}
Recall that 
$\Gamma^0_{\fp^{(p)},\fp^{(p-1)}}(L_{\fp^{(p-1)}}(\lambda))$ is the maximal
finite-dimensional quotient of $\Ind^{\fp^{(p)}}_{\fp^{(p-1)}}(L_{\fp^{(p-1)}}(\lambda))$; this is indecomposable module
with the cosocle  is isomorphic to  $L_{\fp^{(p)}}(\lambda)$.
Using~\Cor{corExtchain} we obtain the

\begin{cor}{cormainold}
The graph $\Ext(\chi)$ is a subgraph of $\hat{\Gamma}^{\chi}(0)$.
\end{cor}

\subsubsection{}
\begin{cor}{maincor}
Assume that $\hat{\Gamma}^{\chi}$ admits a  parametric bipartition $\pari$.
\begin{enumerate}
\item $\Ext(\chi)$ is bipartite with respect to $\pari$.
\item 
$\Gamma^i_{\fp^{(p)},\fp^{(p-1)}}(L_{\fp^{(p-1)}}(\lambda))$ is a semisimple 
$\fp^{(p)}$-module for $i>0$ and has a semisimple radical for $i=0$;

\item
Assume that ${\Gamma}^{\chi}$ admits a parametric bipartition  $\pari$ and  that  the zero weight is a minimal
dominant weight for $\fl^{(p)}$ for each $p=0,1,\ldots,k-1$. Then
$\Ext(\chi)$ is a subgraph of ${\Gamma}^{\chi}(0)$ and $\pari$ defines a bipartition of  $\Ext(\chi)$. Moreover, the claims of  (ii) hold for  $p>\tail(\lambda)$.
\end{enumerate}
\end{cor}
\begin{proof}
\Cor{cormainold} implies (i). For (ii) let
 $\Ext_{(p)}$ be the ``$\Ext$''-graph for $\fp^{(p)}$: the set of vertices
for this graph is $\Lambda^{\chi}$ and the multiplicity of
the edge $\lambda\frac{\ \ \ }{\ \ }\nu$ is 
$\dim\Ext^1(L_{\fp^{(p)}}(\lambda),L_{\fp^{(p)}}(\nu))$. By~\Cor{corExtchain},
 $\Ext_{(p)}$ is a subgraph of $\hat{\Gamma}^{\chi}(0)$, so
 $\pari$ gives a bipartition on $\Ext_{(p)}$.
 For $i>0$ one has
 $$[\Gamma^i_{\fp^{(p)},\fp^{(p-1)}}(L_{\fp^{(p-1)}}(\lambda)):L_{\fp^{(p)}}(\nu)]\not=0\ \ \Longrightarrow \ \ \pari(\nu)+\pari(\lambda)\equiv i+1\ \mod 2.$$
 Therefore there are no non-splitting extensions between
the subquotients
of the $\fp^{(p)}$-module 
$\Gamma^i_{\fp^{(p)},\fp^{(p-1)}}(L_{\fp^{(p-1)}}(\lambda))$; thus 
 this module is completely reducible. For $i=0$
the same holds for $\nu\not=\lambda$ and $L_{\fp^{(p)}}(\lambda)$
is a unique simple quotient of $\Gamma^0_{\fp^{(p)},\fp^{(p-1)}}(L_{\fp^{(p-1)}}(\lambda))$. Hence the radical of
$\Gamma^0_{\fp^{(p)},\fp^{(p-1)}}(L_{\fp^{(p-1)}}(\lambda))$ is semisimple. This gives (ii). 
If the zero weight is a minimal
dominant weight for $\fl^{(p)}$ for each $p$, then
$\tilde{\Gamma}^{\chi}(0)={\Gamma}^{\chi}(0)$ (see~\Lem{lem0mindom}) and so (iii) has the same proof as (ii).
\end{proof}

\subsubsection{Remark}\label{exapr}
We see  that
in order to have a parametric partition on $\hat{\Gamma}^{\chi}$ one has to take
a ``dense enough'' chain of the parabolic subalgebras, since
if $\hat{\Gamma}^{\chi}$ admits such grading, then the
 maximal finite-dimensional quotient of
$\Ind^{\fp^{(p)}}_{\fp^{(p-1)}}(L_{\fp^{(p-1)}}(\lambda))$ 
has a Loewy filtration of length $\leq 2$. In the examples 
below we take $\fl^{(p)}$ of the defect $p$.

\subsection{The Gruson-Serganova algorithm}\label{twographs}
We assume that ${\Gamma}^{\chi}=(\Lambda^{\chi}, E)$  is a $\mathbb{Z}$-graded graph which satisfies  the assumptions of ~\Cor{cortail} (ii).
The following construction is a slight reformulation of the construction described in~\cite{GS}, Sect. 12.

\subsubsection{}
Recall that $\ch L_{\fp_{\lambda}}(\lambda)=e^{\lambda}$.
Set
\begin{equation}\label{eqE}
\mathcal{E}_{\nu}:=\mathcal{E}_{\nu,\fp_{\nu}}=
 R^{-1}e^{-\rho}
\displaystyle\sum_{w\in W} \sgn(w) w\bigl(\frac{e^{\nu+\rho}}{\displaystyle\prod_{\alpha\in\Delta(\fl_{\nu})_1^+}
(1+e^{-\alpha})}\bigr).\end{equation}
Combining~\ref{euler} and~\Cor{cortail} (ii) we get 
$$\sum_{\mu} a^{>}_{\lambda,\mu}  \ch L(\mu)=\mathcal{E}_{\lambda},$$
for $A^{>}(\kappa_{-1})=(a^{>}_{\lambda,\nu})$ defined as in~\ref{Finass}. The matrix $A:=A^{>}(\kappa_{-1})$ is lower-triangular
with $a_{\lambda,\lambda}=1$.
 Thus $A$ is invertible that is  
$$\ch L(\lambda)=\sum_{\mu} a'_{\lambda,\mu}\mathcal{E}_{\mu},$$
for  $(a'_{\lambda,\mu}):=A^{-1}$.
In the light of~\Lem{lemAA} the entries of $A^{-1}$ can be expressed in terms of  $b$-increasing paths from $\nu$ to $\lambda$  if $b: E\to\mathbb{Z}$
{\em would satisfy the  property} (BB). 
Unfortunately,  $b$ almost never satisfy (BB);
however,  it is often possible to find a decreasingly-equivalent function $b'$
satisfying (BB) (we do not require that $b'$  satisfies (Tail)).
 For  $\fgl(M|N)$ and $\osp(M|N)$  the function $b'$ is given
in~\cite{MS} and~\cite{GS} respectively; in~\ref{Dgosp} below
we  describe $b'$ for $\osp(M|N)$-case. Denoting by $\cP^{<}_{b'}(\nu,\lambda)$
the set of $b'$-increasing paths in $\Gamma^{\chi}$  we obtain
$$a^{>}_{\lambda,\mu}=\sum_{P\in \cP^{>}_{b}(\mu,\lambda)}\kappa_{-1}(P)=\!\!\sum_{P\in \cP^{>}_{b'}(\mu,\lambda)}\kappa_{-1}(P),\ \ \ 
a'_{\lambda,\mu}=\sum_{P\in \cP^{<}_{b'}(\mu,\lambda)}(-1)^{length(P)}\kappa_{-1}(P).$$

\subsubsection{}
Assume that $\pari: V\to \{0,1\}$ is a signed bipartition
of $(\Gamma^{\chi}(-1),\kappa_{-1})$ (see~\ref{bipartition} for definition).
By~(\ref{dexic})  the number
$(-1)^{\pari(\lambda)-\pari(\mu)}a'_{\lambda,\mu}$ is a non-negative integer.
(i.e. the Gruson-Serganova character
formula is ``positive'').   These number can be interpreted as follows.
Consider the following modification of the graph $\Gamma^{\chi}=(V,E)$: the  graph $D^{\chi}$  with the same set of vertices $V=\Lambda^{\chi}$ and the set of egdes $E'$ obtained from $E$ by 
taking each edge $\nu\overset{e}{\longrightarrow}\lambda$ with the multiplicity $(-1)^{\pari(\lambda)-\pari(\nu)}\kappa_{-1}(e)$
(this number is non-negative since $\pari$ is a signed bipartition). By above,
$$(-1)^{\pari(\lambda)-\pari(\mu)}a'_{\lambda,\mu}\text{ is the number of $b'$-increasing paths from $\mu$ to $\lambda$ in $D^{\chi}$.}$$

For $\osp(M|N)$-case the graph $D^{\chi}$ is described in~\cite{GS}; 
we give some details in~\ref{GScharosp} below;
the case $\fgl(m|n)$ will be treated in~\cite{GH2}.

\subsubsection{}\label{trick}
The assumption that $\Gamma^{\chi}$ is $\mathbb{Z}$-graded
can be weaken using the following trick. Fix a set of ``bad vertices''
$\Lambda'\subset\Lambda(\chi)$ and
consider a graph $\Gamma'(\chi)$ obtained from $\Gamma^{\chi}$ by erasing
all edges ending at $\lambda\in\Lambda'$. Assume that 
$\Gamma'(\chi)$ is  $\mathbb{Z}$-graded.
The above reasoning allows to express $\ch L(\mu)$ in terms of 
$\mathcal{E}_{\lambda}$ for $\lambda\in\Lambda(\chi)\setminus \Lambda'$ and
$\ch L(\nu)$ for $\nu\in \Lambda'$, see~\ref{G3F4} for examples.

\subsection{Examples}
The Poincar\'e polynomials for certain  chains of parabolic subalgebras were
computed for the finite-dimensional Kac-Moody superalgebras 
in~\cite{S},~\cite{MS},~\cite{Germoni2},\cite{Lilit}
and for $\fq_n$ in~\cite{PS1}.
 In all these cases the chain satisfies the following condition:
 $\fl^{(p)}$ has defect $p$. Below we list some properties of the corresponding graphs (for the $\fgl$-case we consider only the principal block in $\fgl(n|n)$).  

In all these examples
the Poincar\'e polynomials have the following property: the polynomial
$K^{\lambda,\mu}_{\fp^{(s)}, \fp^{(s-1)}}-\delta_{\lambda,\mu}$ is non-zero for at most one value of  $s> \tail(\lambda)$. We denote by $\kappa^{\lambda,\mu}$
the corresponding non-zero polynomial (if it exists) and set $\kappa^{\lambda,\mu}=0$ otherwise.

The above property implies that $\Gamma^{\chi}$ does not have multi-edges
(and that $\kappa(\mu\to\lambda)=\kappa^{\lambda,\mu}$). 
The graph $\Gamma^{\chi}$  admits a $\mathbb{N}$-grading and satisfies (Tail).

 For $\fg\not=\fq_n$ the graph
$\Gamma^{\chi}$  admits 
a parametric bipartition $\pari$. 

For $\fg\not=\fq_n,\osp(2m|2n)$, the polynomials $\kappa^{\lambda,\mu}$ 
 are monomials,  so the condition
on the parametric bipartition simply means that $\kappa^{\lambda,\mu}$ is zero or
$z^i$ for $i\equiv \pari(\lambda)-\pari(\mu)+1$ modulo $2$.
In these cases $D^{\chi}=\Gamma^{\chi}$. 

For $\fg=\osp(2m|2n),\fq_n$ with $\chi$ of atypicality greater than one,  
 $\kappa^{\lambda,\mu}\in \{0,z^i, z^i+z^{j}\}$
and the condition on  the parametric bipartition takes the  form
$i\equiv \pari(\lambda)-\pari(\mu)+1$  and $i\equiv j$ modulo $2$. This holds for
$\osp(2m|2n)$ in this case $D^{\chi}$ is obtained from $\Gamma^{\chi}$
by doubling the  edges with $\kappa^{\lambda,\mu}=z^i+z^{j}$.

An interesting example is the $\fq_n$-case. For $\chi$ of atypicality greater than one, $i-j$ can be odd, so 
$\Gamma^{\chi}$ does not admit
a parametric bipartition  and $\Gamma^{\chi}(0)$ is not bipartited.
By~\cite{MM} the $\Ext$-graph is not bipartite. However, 
the $(\Gamma^{\chi}(-1),\kappa_{-1})$ admits a signed bipartition
$\pari$.
In this case $D^{\chi}$ is obtained from $\Gamma^{\chi}$
by doubling the edges with even $j$ and deleting the edges with 
odd $j$ ($\kappa^{\lambda,\mu}(-1)=0$ if $j$ is odd). The  Gruson-Serganova algorithm gives the Su-Zhang character formula~\cite{SZq}.

\subsection{Examples of defect one}\label{exampr}
We start form the examples when $\fg$ has defect $1$ and
the chain is $\fb=\fp^{(0)}\subset\fp^{(1)}=\fg$.
 
In this case $\ b(e)=1$ for each $e\in \hat{\Gamma}^{\chi}$,
so the conidtion (BB) does not hold if $\Gamma^{\chi}$ contains
paths of length two. Moreover, the descreasing paths in $\Gamma^{\chi}$
are the paths containing at most one edge. Thus  any function $b': E\to \mathbb{Z}$ 
satisfying $b'(e_1)<b'(e_2)$ for each path 
$\cdot\overset{e_1}{\longrightarrow}\cdot\overset{e_1}{\longrightarrow}\cdot$
in $\Gamma^{\chi}$  is descreasingly equivalent to 
$b$ and satisfies (BB).

 We will depict an edge $e$  in 
$\Gamma^{\chi}$ (resp., in $\hat{\Gamma}^{\chi}$) 
as $\nu\overset{j;\kappa(e)}{\longrightarrow}\lambda$ (resp., as 
$\nu\overset{\kappa(e)}{\longrightarrow}\lambda$)
with $j=b'(e)$.

Except
for $\fgl(1|1)$ one has $\tail \lambda=\delta_{0,\lambda}$; for 
$\fgl(1|1)$ one has $\tail\lambda=1$
for each atypical weight $\lambda$. Recall that the condition (Tail) in this case
is tautological.

Except for $F(4), G(3)$  we take $\chi$ corresponding to the principal block
(i.e., $\chi$ is the central character of the trivial module $L(0)$).

In all these  examples the Poincar\'e polynomials $\kappa(e)\in\{1,z\}$; 
the graph 
$\Gamma^{\chi}$ admits a parametric partition $\pari$ (which means that
$\kappa(e)=1$ if $e$ connects the vertices with different value of $\pari$
and $\kappa(e)=z$ otherwise). In particular, $D^{\chi}=\Gamma^{\chi}$.

\subsubsection{Example: $\fgl(1|1)$}\label{gl11}
We take $\fg:=\fgl(1|1)$ with $\Delta^+=\{\alpha\}$.

The simple modules in the principal block are $\{L(s\alpha)| s\in\mathbb{Z}\}$.
As a vector space $L(s\alpha)\cong \Pi^s(\mathbb{C})$, 
so $\sdim L(s\alpha)=(-1)^s$; we define
$\pari(L(s\alpha)):=s$ modulo $2$.
For a non-zero $x\in\fg_{1}$ one has
$\DS_x(L(s\alpha))=\Pi^s(\mathbb{C})$. We have

$$\begin{array}{lcccccccccc}
\hat{\Gamma}^{\chi} & &
\ldots  &\overset{(1)}{\longrightarrow}&-\alpha &\overset{(1)}{\longrightarrow}& 0  &\overset{(1)}{\longrightarrow}&\alpha&\overset{(1)}{\longrightarrow}&\ldots\\
\Gamma^{\chi} & & \ldots & &-\alpha & & 0& & \alpha & &\ldots\\
\Ext(\chi) & & \ldots  &\longleftrightarrow&-\alpha &\longleftrightarrow& 0 &\longleftrightarrow&\alpha &  \longleftrightarrow &     \ldots\\
\end{array}$$
Clearly, $\pari(L(s\alpha)):=s$  modulo $2$ defines
 a parametric partition on $\hat{\Gamma}^{\chi}$ and a bipartition
on $\Ext(\chi)$; the graph $\hat{\Gamma}^{\chi}$ does not admit
a $\mathbb{N}$-grading.

\subsubsection{Example: $\osp(2|2)$}\label{osp22}
Take $\fg=\osp(2|2)\cong \fsl(2|1)$ with the base $\Sigma=\{\delta_1\pm\vareps_1\}$.

The simple modules in the principal block are $\{L(\lambda_s)| s\in\mathbb{Z}\}$,
where $\lambda_s:=|s|\delta_1+s\vareps_1$. Note that $0$ is a minimal
dominant weight, so the assumption of~\Cor{maincor} (iii) holds.
The extension graph $\Ext(\chi)$  is
$A_{\infty}^{\infty}$. The Poincar\'e polynomials $K^{\lambda,\nu}_{\fg,\fb}$ were computed in~\cite{GS}, Sect. 12.   One has
$$\begin{array}{lcccccccccc}
{\Gamma}^{\chi} & &
\ldots  &\overset{(2;1)}{\longleftarrow}&\lambda_{-1} &\overset{(1;1)}{\longleftarrow}& \lambda_0  &\overset{(1;1)}{\longrightarrow}&\lambda_1&\overset{(2;1)}{\longrightarrow}&
\ldots\\
\Ext(\chi)=A_{\infty}^{\infty} & & \ldots  &\longleftrightarrow&\lambda_{-1} &\longleftrightarrow& \lambda_0 &\longleftrightarrow&\lambda_1 &  \longleftrightarrow &     \ldots\\
\end{array}$$
By~\cite{GS}, Lemmata 25, 26 
the graph $\hat{\Gamma}^{\chi}$ 
can be obtained from  the graph
${\Gamma}^{\chi}$ by adding a loop around $\lambda_0$ which is marked by $(1;z)$.
Observe that $\pari(\lambda_j):=p(\lambda)\equiv j$ module $2$
is a parametric partition on $\hat{\Gamma}^{\chi}$ and is a bipartition
on $\Ext(\chi)$.
The function $||\lambda_i||_{gr}:=|i|$ gives a $\mathbb{N}$-grading on
${\Gamma}^{\chi}$; the graph $\hat{\Gamma}^{\chi}$ does not admit
a $\mathbb{N}$-grading (since it has a loop).

The Gruson-Serganova formula is
\begin{equation}\label{eqchar22}
\ch L(\lambda_{\pm j})=\sum_{s=0}^j (-1)^{j-s}\mathcal{E}_{\lambda_{\pm s}}\ \ \ 
\text{ for }j\geq 0.
\end{equation}

\subsubsection{Example: $\osp(3|2)$}\label{osp32}
Take $\fg=\osp(3|2)$ with the base
 $\Sigma=\{\vareps_1-\delta_1,\delta_1\}$.
The simple modules in the principal block are $\{L(\lambda_s)\}_{s=0}^{\infty}$,
where $\lambda_0:=0$ and $\lambda_s:=(s-1)\delta_1+s\vareps_1$ for $s>1$;
$0$ is a minimal dominant weight. The Poincar\'e polynomials 
$K^{\lambda,\nu}_{\fg,\fb}$ were computed in~\cite{Germoni2}. One has
$$  
\xymatrix{&\hat{\Gamma}^{\chi}: &\lambda_1\ar^{(1)}[r]\ar[rd]
&\lambda_2\ar^{(1)}[r] &\lambda_3
\ar^{(1)}[r]&\ldots& \\
& && \lambda_0\ar^{(z)}[lu]\ar_{(1)}[u]&&&
}
$$
The map $\pari(\lambda_0):=0$, $\pari(\lambda_i):\equiv i-1$ modulo $2$ for $i\not=0$ 
 is a parametric partition on $\hat{\Gamma}^{\chi}$ (one has $\pari(\lambda_i)=p(\lambda_i)$).  The graph $\Gamma^{\chi}$ is
$$  
\xymatrix{& &\lambda_1\ar^{(2;1)}[r]&\lambda_2\ar^{(3;1)}[r] &\lambda_3
\ar^{(4;1)}[r]&\ldots& \\
& && \lambda_0\ar^{(1;z)}[lu]\ar_{(2;1)}[u]&&&
}
$$
and the ``undirected version'' of $\Ext(\chi)$ coincides with $\Gamma^{\chi}(0)$.
The function $||\lambda_i||_{gr}:=i$ gives
an  $\mathbb{N}$-grading $\Gamma^{\chi}$ (note that
the graph  $\hat{\Gamma}^{\chi}$
is not $\mathbb{Z}$-graded). 

The Gruson-Serganova  formula is
$\ch L(\lambda_0)=\mathcal{E}_{0}=1$ and
\begin{equation}\label{eqchar32}
 \ch L(\lambda_1)=\mathcal{E}_{\lambda_{0}}+\mathcal{E}_{\lambda_{1}},\ \ \ 
\ch L(\lambda_{j})= 2(-1)^{j-1} \mathcal{E}_{\lambda_{0}}+\sum_{s=1}^j (-1)^{j-s} \mathcal{E}_{\lambda_{s}}\ \ \text{ for }j>1.
\end{equation}

%
%
%

\subsubsection{Remark}\label{rem2221}
For $\osp(3|2)$ we have two bases: the ``mixed'' base
 $\{\vareps_1-\delta_1;\delta_1\}$
and the base $\{\delta_1-\vareps_1;\vareps_1\}$. The computations in~\cite{Germoni2}
are performed for the second base; it is not hard to see that
for the first base  the results are the same.

For  $\osp(2|2)\cong \fsl(1|2)$ we have two bases: the ``mixed'' base $\{\delta_1\pm\vareps_1\}$ and the distinguished base
$\{\vareps_1-\delta_1;2\delta_1\}$.  The graphs for the mixed base
are given in~\ref{osp22}; the graphs for the distinguished base are the same
as in~\ref{gl11} (notice that $0$ is a minimal dominant weight 
for the mixed base, whereas for the distinguished base
the set of dominant weights does not have minimal elements).

\subsubsection{Cases $\osp(4|2)$, $G(3)$ and $F(4)$}\label{G3F4} 
 Recall that
$\Ext(\chi)$ is either $D_{\infty}$ or $A^{\infty}_{\infty}$, see~\ref{DSat1}.
The  graphs $\hat{\Gamma}(\chi)$ for a certain distinguished  Borel subalgebras
were computed in~\cite{Germoni2}, \cite{Lilit}: this graph is 
 is the same as
for $\osp(3|2)$ (resp., as for $\osp(2|2)$) if $\Ext(\chi)=D_{\infty}$ and 
(resp.,  if $\Ext(\chi)=A^{\infty}_{\infty}$).

For the principal blocks the graph $\Ext^1(\chi)$ is  $D_{\infty}$
and so the graph $\hat{\Gamma}(\chi)$  is 
the same as for $\osp(3|2)$.
Since $\tail 0=1$ and  $\tail\lambda=0$ for each $\lambda\not=0$,
the graph $\Gamma^{\chi}$ is  the same as  for $\osp(3|2)$. 

For the non-principal blocks   one has
$\hat{\Gamma}^{\chi}=\Gamma^{\chi}$ (since $\tail\lambda=0$ for each $\lambda\in\Lambda^{\chi}$).

The character formulae for these cases were obtained in~\cite{Germoni2},\cite{Lilit}.
The above approach give other type of character formulae.

By above, for the principal block  $\Gamma^{\chi}$ is the same as for $\osp(3|2)$,
so we obtain the same character formula~(\ref{eqchar32}).
Consider a non-principal block. By above, $\Gamma^{\chi}=\hat{\Gamma}^{\chi}$
is the same as the graph $\hat{\Gamma}^{\chi}$ 
for $\osp(2|2)$ or for $\osp(3|2)$. In both cases
${\Gamma}^{\chi}$ have  cycles and all these cycles contain $\lambda_0$;
the graph $\Gamma'$ which is obtained from
$\Gamma^{\chi}$ by erasing all edges ending at $\lambda^0$ is
$\mathbb{N}$-graded. Using~\ref{trick}, we get
Gruson-Serganova type character formulae which can be obtained 
from~(\ref{eqchar22}) and~(\ref{eqchar32})
respectively by changing $\mathcal{E}_0$ by $\ch L(\lambda_0)$
(notice that
$\ch L(\lambda_0)$ is given by the Kac-Wakimoto formula).

\subsection{Remark on Gruson-Serganova type character formulae}\label{remGSchar}
Let $\cB$ one of the blocks  of atypicality $1$ considered in Section~\ref{sectatyp1}.
Then $\Gamma^{\chi}$ is one of the graphs appeared in~\ref{osp22}, \ref{osp32} and
we call $\lambda_0$ a {\em Kostant weight};
notice that  $\lambda_0$ is uniquely defined in terms of $\Gamma^{\chi}$
which has fewer  automorphisms than
$\Ext(\chi)$.

Take $L(\lambda_i)\in \Irr(\cB)$ and write 
 $\ch L(\lambda_i)=\sum_i a_i\mathcal{E}_{\lambda_i}$
using~(\ref{eqchar22}), (\ref{eqchar32}).
By~\ref{DSat1} one has $\DS_x(L(\lambda_i))=\Pi^s(\DS_x(L(\lambda_0)))^{a_0}$
for $s=0$ if $a_0>0$ and  $s=1$ if $a_0<0$. This can be translated to the language 
of supercharacters in the following manner.

Retain notation of~\ref{subsectpari}. Denote by
 $\Sch(\fg)$ the image of the map
$\sch: \tilde{\cF}(\fg)\to \mathbb{Z}[\Lambda_{m|n}]$.
Since $\sch (\Pi(V))=-\sch(V)$ one has
 $\sch(\tilde{\cF}(\fg))=\sch (\cF(\fg))$.
For Lie superalgebras~(\ref{list}) the ring $\Sch(\fg)$ is isomorphic to the reduced
Grothendieck ring of $\tilde{\cF}(\fg)$ and $\DS_x$ induces
an algebra homomorphism $ds'_x:\Sch(\fg)\to \Sch(\fg_x)$
given by $f\mapsto f|_{\fh_x}$, see~\cite{HR}.

For $V\in\cF(\fg)$ one has $\sch V=\pi(\ch V)$, where
$\pi: \mathbb{Z}[\Lambda_{m|n}]\to \mathbb{Z}[\Lambda_{m|n}]$ is the involution $\pi(e^{\mu}):=p(\mu)e^{\mu}$. In particular, $\{\pi(\mathcal{E}_{\lambda})|\ 
\lambda\in\Irr(\cF(\fg)))$ forms a basis of $\Sch(\fg)$. If $\fg$ has defect $1$,
then the kernel of the map $ds'_x:\Sch(\fg)\to \Sch(\fg_x)$ 
is spanned by the basis elements 
 $\pi(\mathcal{E}_{\lambda})$ for
$\lambda$s which are not  Kostant weights. A similar property hold for
the $\fgl(1|n)^{(1)})$-case; in~\cite{GH2} 
we will show that this holds for $\osp(m|n)$-case as well
(the situation is more complicated for $\fgl(m|n)$).

\subsection{Example: $\fg=\fgl(k|k)$}\label{glkk}
Take $\fg=\fgl(k|k)$ 
 with a distinguished Borel subalgebra $\fb$.
For $p=0,\ldots,k$ we denote by $\fp^{(p)}$ a  parabolic subalgebra containing
$\fb$ with the Levi factor $\fl^{(p)}\cong \fgl(p|p)$
($\fp^{(p)}$ is unique since $\fb$ is a distinguished Borel).
We consider  the corresponding chain of parabolic subalgebras~(\ref{parabolicchain}). 

The Poincar\'e polynomials $K^{\lambda,\nu}_{\fp^{(p+1)},\fp^{(p)}}$ were computed
in~\cite{S},~\cite{MS} Cor. 3.8. 
The graph $\hat{\Gamma}^{\chi}$ is $\mathbb{Z}$-graded and
does not have multi-edges. For the principal block  the condition (Tail) holds.
The map $p(\lambda)$ defines a parametric partition on $\hat{\Gamma}^{\chi}$.
By~\cite{MS}, Sect. 6,  $\Ext(\chi)=\hat{\Gamma}^{\chi}(0)$ (this can be also deduced from~\cite{BS}). One has $\Ext(\chi)\not={\Gamma}^{\chi}(0)$ 
(see the example of $\fgl(1|1)$ above).

\section{Case $\osp(M|N)$}\label{sectosp}
In this section $\fg=\osp(M|N)$.   The category $\Fin(\fg)$ was studied in~\cite{GS} and~\cite{ES}. In this section we deduce the existence
of $\pari$ satisfying (Dex1) from the results of~\cite{GS}.
Another approach is developed in~\cite{ES}, \cite{ES2}.
By~\cite{GS}, each block of atypicality $k$ in $\tilde{\cF}(\fg)$ is equivalent either  to a principal $\fosp(2k+t|2k)$-block $\cB$  or to $\Pi(\cB)$,
 where $t=1$ for odd $M$  and $t=0,2$ for even $M$, and
 this equivalence is ``compatible with character formula'',
see~\Rem{genchar}. 

We fix a ``mixed'' base consisting of odd roots, see~\ref{mix} below.
We denote by $\chi$
the central character of the principal block $\cB$ and retain notation of~\ref{usefulgraphs}.
For each $\lambda\in\Lambda^{\chi}$ we set
$\pari(L(\lambda))=p(\lambda)$ if
$t=0,1$;  for $t=2$ we define
$\pari(L(\lambda))$ 
via a one-to-one correspondence between the simple modules
in the principal blocks for $\osp(2n+1|2n)$ and $\osp(2n+2|2n)$, see~\ref{paridef} below.

The multiplicities $^iK^{\lambda,\nu}_{\fp^{(p)},\fp^{(p-1)}}$ were computed
by C.~Gruson and V.~Serganova  in~\cite{GS} (see~\cite{GSsmall} for small rank examples). We will recall their results  and  describe 
the graphs $\Gamma^{\chi}, D^{\chi}$ in~\ref{gammai}. We will see that $\Gamma^{\chi}$ is $\mathbb{N}$-graded and satisfies (Tail). We will check that
$\pari$  is a parametric partition and that $b$ and $b'$ are
decreasingly-equivalent. As a result~\Cor{maincor}  holds for the block $\cB$
and the  character formula
 (15) in~\cite{GS} 
can be rewritten in the form~(\ref{GSchar}).

Everywhere
in this section, except for~\Rem{genchar}, we take  $\fg=\osp(2k+t|2k)$ for $t=0,1,2$. 

\subsection{Notation}\label{hwtpr}
We take $\fg:=\osp(2k+t|2k)$ for $t=0,1,2$.
The  integral weight lattice  $\Lambda_{k+\ell|k}$ is spanned by $\{\vareps_i\}_{i=1}^{k+\ell}\cup\{\delta_i\}_{i=1}^k$, where
$\ell:=0$ for $t=0,1$ and $\ell:=1$ for $t=2$; the parity function 
 is given by 
$p(\vareps_i)=\ol{0}$, $p(\delta_j)=\ol{1}$ for all $i,j$.

\subsubsection{}\label{mix}
We fix a triangular decomposition
 corresponding to the ``mixed'' base:
 $$\Sigma:=\left\{\begin{array}{ll}
\vareps_1-\delta_{1},\delta_1-\vareps_2,\ldots,\vareps_{k}-\delta_{k},
\delta_k\ & \text{ for }\osp(2k+1|2k)\\
\delta_{1}-\vareps_1,\vareps_1-\delta_{2},\ldots,\vareps_{k-1}-\delta_{k},
\delta_k\pm\vareps_k & \text{ for } \osp(2k|2k)\\
\vareps_{1}-\delta_1,\delta_1-\vareps_{2},\ldots,\vareps_{k}-\delta_k,\delta_k\pm\vareps_{k+1} & \text{ for }
\osp(2k+2|2k).\\
\end{array}\right.$$
We have $\rho=0$ for $t=0,2$ and $\rho=\displaystyle\frac{1}{2}\sum_{i=1}^k(\delta_i-\vareps_i)$ for $t=1$.

\subsubsection{}\label{gp}
 We consider the embeddings 
$$\osp(t|0)\subset\osp(2+t|2)\subset\osp(4+t|4)\subset\ldots\subset\osp(2k+t|2k)=\fg$$
where $\osp(2p+t|2p)$ corresponds to the last $2p+\ell$ roots in $\Sigma$; 
we denote the subalgebra $\osp(2p+t|2p)$ by $\fl^{(p)}$. Note that
$\fl^{(k)}=\fg$ and $\fl^{(0)}=0$ for $t=0,1$,
$\fl^{(0)}=\mathbb{C}$ to $t=2$.

For $p=0,\ldots,k$ we consider the parabolic subalgebra $\fp^{(p)}:=\fl^{(p)}+\fb$.
Notice that
 $\fl^{(p)}$ is the Levi subalgebra of $\fp^{(p)}$; as in~\ref{usefulgraphs},
we denote by $\tail(\lambda)$  the maximal index $q$ such that
$\lambda|_{\fh\cap\fl_q}=0$.

%
%
%

\subsection{Highest weights in the principal block}\label{lambda+rho}
For $\lambda\in\Lambda_{k+\ell|k}$ we set
$$a_i:=-(\lambda|\delta_i)$$ 
and notice that
$p(\lambda)=\displaystyle\sum_{i=1}^k a_i$.
By~\cite{GS},  $\lambda\in\Lambda^{\chi}$ if and only if $a_1,\ldots,a_k$
 are non-negative integers with $a_{i+1}>a_i$ or $a_i=a_{i+1}=0$, and
 $$\lambda+\rho=\left\{
\begin{array}{lll}
\displaystyle\sum_{i=1}^{k-1} a_i(\vareps_i+\delta_i)+a_k(\delta_k+\xi \vareps_k) &\text{ for } & t=0\\
\displaystyle\sum_{i=1}^{k} a_i(\vareps_i+\delta_i)&\text{ for } & t=2\\
\displaystyle\sum_{i=1}^{s-1} (a_i+\frac{1}{2})
(\vareps_i+\delta_i)+\frac{1}{2}(\delta_s+\xi  \vareps_s)+\displaystyle\sum_{i=s+1}^{k} \frac{1}{2}
(\delta_i-\vareps_i)&\text{ for } & t=1\end{array}
\right.$$
for $\xi\in\{\pm 1\}$. For $t=1$ we have $1\leq s\leq k+1$
and $a_s=a_{s+1}=\ldots=a_k=0$ if  $s\leq k$ (for $s=k+1$ we have
 $\lambda+\rho=\sum_{i=1}^{k} (a_i+\frac{1}{2})
(\vareps_i+\delta_i)$).

\subsubsection{}\label{wtdiag}
Take $\lambda\in\Lambda^{\chi}$ and define $a_i$ for $i=1,\ldots,k$ as above.
We assign to $\lambda$  a ``weight diagram'', which is
a number line with one or several symbols drawn at each position with non-negative integral coordinate:

we put the sign $\times$ at each  position with the coordinate $a_i$;

for $t=2$ we add $>$ at the zero position;

we add the ``empty symbol'' $\circ$ to all empty positions.

For $t\not=2$ a weight $\lambda\in\Lambda^{\chi}$ is not uniquely determined by the weight diagram constructed by the above procedure. Therefore, for  $t=0$ with $a_k\not=0$ 
and for $t=1$ with $s\leq k$, we write the 
sign of $\xi$ before the diagram  ($+$ if $\xi=1$ and $-$ if $\xi=-1$).

Notice that each position with a non-zero coordinate contains either $\times$ or $\circ$. For $t=0,1$ 
the zero position is occupied either by $\circ$ or by
several symbols $\times$; we
write this as $\times^i$ for $i\geq 0$. Similarly, for
 $t=2$ the zero position is occupied by $\overset{\times^i}{>}$ with $i\geq 0$.

\subsubsection{}
Notice that  $\tail(\lambda)$ is equal to the number of symbols $\times$ at the zero position of the weight diagram for all cases
except when $t=1$ and the diagram has the sign $+$; in the latter case  
the number of symbols $\times$ at the zero position is $\tail(\lambda)+1$.

\subsubsection{Examples}
The weight diagram of $0$ is $\times^k$ for $t=0$, 
$-\times^k$ for $t=1$ and $\overset{\times^k}{>}$ for $t=2$; one has $\tail(0)=k$.

The diagram $+\circ\times\times$ 
corresponds to the $\osp(4|4)$-weight $\lambda=\lambda+\rho=(\vareps_2+\delta_2)+2(\vareps_1+\delta_1)$
  with  $\tail(\lambda)=0$.

The diagram $+\times^3$ 
corresponds to $\osp(7|6)$-weight
 $\lambda=\vareps_1$  with $\tail(\vareps_1)=2$.

The empty diagram correspond to $\osp(0|0)=\osp(1|0)=0$; the diagram
$>$ 
corresponds to the weight $0$  for $\osp(2|0)=\mathbb{C}$.

\subsubsection{}\label{diag}
For $t=0,1,2$ we denote by $Diag_{k;t}$ the set of (signed) weight diagrams.
The above procedure gives a one-to-one correspondence between $\Lambda^{\chi}$
and $Diag_{k;t}$. For each diagram $f\in Diag_{k;t}$ we denote by $\lambda(f)$
the corresponding weight in $\Lambda^{\chi}$. 

In all cases the weight diagrams in $Diag_{k,t}$ contains $k$ symbols $\times$. 

For a  diagram $f$ and  $a\in\mathbb{N}$ 
we denote by $f(a)$ the symbols at the position $a$.
For  $t=0$ (resp., $t=1$) a  diagram in  $Diag_{k,t}$  has a sign
if and only if $f(0)=\circ$ (resp.,  $f(0)\not=\circ$).

\subsubsection{Map $\tau$}\label{tau}   
Following~\cite{GS}, we introduce a bijection
 $\tau: Diag_{k;2}\to Diag_{k;1}$. For $f\in Diag_{k;2}$ the
 diagram $\tau(f)\in Diag_{k;1}$  is constructed by the
following procedure:

we remove $>$ and then
shift all entires at the non-zero positions of $f$ by one position to the left; then we 
 add a sign  in such a way that $\tail(f)=\tail(\tau(f))$:
the sign $+$ if $f(1)=\times$ and
the sign
$-$ if $f(1)=\circ$ and $f(0)\not=>$.
For instance, 
$$\tau(\overset{\times}{>}\circ\times)=-\times\times,\ \ \tau(\overset{\times}{>})=-\times,\ \tau(>\times)=+\times,\ 
\tau(>\circ \times)=\circ \times.
$$

One readily sees that $\tau$ is a one-to-one correspondence.

\subsection{The maps $||\lambda||, ||\lambda||_{gr},\pari$}\label{paridef}
Let $\{a_i\}_1^k$ be the coordinates of the symbols $\times$ in 
a diagram of $\lambda$. We set
$$||\lambda||:=\left\{\begin{array}{ll}
\displaystyle\sum_{i=1}^k a_i & \text{ for } t=0,1\\
||\tau(f)||& \text{ for } t=2, 
\end{array}
\right.\ \ \ ||\lambda||_{gr}:=\left\{\begin{array}{ll}
\displaystyle\sum_{i=1}^k a_i & \text{ for } t=0,2\\
||\tau^{-1}(f)||& \text{ for } t=1
\end{array}
\right.
$$
and
$$\pari(\lambda):\equiv ||\lambda|| \mod 2.$$

Clearly, $||\lambda||,||\lambda||_{gr}\in\mathbb{N}$ and $||\lambda||_{gr}=0$
if and only if $\lambda=0$.

\subsection{Graph $\Gamma^{\chi}$}\label{gammai}
Retain notation of Section~\ref{Gamma}. 
Consider the  chain of parabolic subalgebras~(\ref{parabolicchain}) 
with $\fp^{(p)}$, $p=0,1,\ldots,k$  defined in~\ref{gp}.

The  Poincar\'e polynomials  $K^{\lambda,\nu}_{\fp^{(p)},\fp^{(p-1)}}(z)$ for $p>\tail\lambda$ were computed
in~\cite{GS},  Sect. 11.  It is proven that   the map
$\tau: Diag_{k;2}\iso \ Diag_{k;1}$
(see~\ref{tau}) preserves these polynomials 
(i.e., $K^{\tau(\lambda),\tau(\nu)}_{\fp^{(p)},\fp^{(p-1)}}=K^{\lambda,\nu}_{\fp^{(p)},\fp^{(p-1)}}$); the coefficients
$^iK^{\lambda,\nu}_{\fp^{(p)},\fp^{(p-1)}}$ are $0$ or $1$ and for $t=0,2$ 
one has $^iK^{\lambda,\nu}_{\fp^{(p)},\fp^{(p-1)}}=1$
if and only if the diagram of $\lambda$ can be obtained from the diagram of $\nu$
by a ``move'' of degree $i$ which ends at the $p$th symbol $\times$  
in the diagram of $\lambda$; we will give some details in~\ref{move0} below
and give a descrip[tion in terms of ``arch diagrams'' in Section~\ref{arcs}.

\subsubsection{Moves for $t=0,2$}\label{move0}
Consider the cases $t=0,2$.  Take $f\in Diag_{k;t}$. 
For each $p,q\in\mathbb{N}$ denote by 
$l_f(p,q)$ the number of symbols $\times$ minus the number of symbols $\circ$
strictly between the positions $p$ and $q$ in $f$. 

A diagram  $f\in Diag_{k;t}$ can be transformed to a diagram $g$ by a ``move'' of degree $d$ if
$f$ satisfies certain conditions, and
$g$ is obtained from $f$ by moving either one symbol
$\times$ from a position $p$ to an 
empty position $q$ with $q>p$ or moving two symbols $\times$ from the zero position to empty positions $p,q$ with $p<q$. If $f$ has a sign,
then $g$ has the same sign.
In both cases we say that the move ``ends at the position $q$''.
We will not specify all conditions on $f$, but notice that 
these conditions depend only on $l_f(s,q)$ for $s<q$.

By above, $\tail(f)-\tail(g)$ is $0,1$ or $2$;
the degree  $d$ satisfies the formula 
\begin{equation}\label{formulaford}
d=\left\{\begin{array}{ll}
l_f(p,q) & \text{ if } \tail(f)-\tail(g)\not=1\\
l_f(p,q)\ \text{ or }\ 2\tail(g)+l_f(p,q)& \text{ if } t=0,\ \ \tail(f)-\tail(g)=1\\
2\tail(g)+l_f(p,q)+1& \text{ if } t=2,\ \ \tail(f)-\tail(g)=1,
\end{array}
\right.
\end{equation}
The conditions on $f$ imply that
$d\geq 0$. Except for the case $t=0$ with $\tail(f)-\tail(g)=1$,
$g$ can be obtained from $f$ by
at most one move; for $t=0$  
it is possible sometimes to obtain $g$ from $f$ 
by two moves of different degrees.
We give below examples of several moves and their degrees
$$
\begin{array}{lccl}
\ldots \times \circ \ldots \longrightarrow \ldots \circ \times \ldots
& & & d=0\\
\ldots\times\times\times\circ\circ\ldots \longrightarrow \ldots \circ \times \times \circ\times\ldots
& & & d=1\\
\times\circ \ldots\ \longrightarrow (\pm)\circ\times\ldots & & & d=0\\
\times^2\circ \ldots\ \longrightarrow \times\times\ldots & & & d=0,2.\\
\end{array}$$

\subsubsection{}
\begin{cor}{coremovedeg}
Take $t=0$ or $t=2$.
Let $\nu,\lambda\in\Lambda^{\chi}$ be two weights with the diagrams
$f$ and $g$ respectively. Assume that 
 $g$ is obtained from $f$ by a move of degree $d$.
Then

(i) $||\lambda||_{gr}>||\nu ||_{gr}$ and $\lambda>\nu$;

(ii) if the move ends in the $i$th symbol $\times$ in $g$, then 
$\tail\nu\leq i$;

(ii) $\pari(\lambda)-\pari(\nu)+d\equiv 1\ mod 2$.
\end{cor}
\begin{proof}
The  inequality  $||\lambda||_{gr}>||\nu||_{gr}$
follows from the fact that $\tau$ preserves $||\ ||_{gr}$ and that 
we  move
symbol(s) $\times$  to the right. The  inequality $\lambda>\nu$
follows  from the fact  that we  move
the symbol(s) $\times$  to the right;  (ii) is obvious.
 For (iii) retain notation of~\ref{move0}
and observe that  
$$l_f(p,q)\equiv q-p+1,\ \ \ \pari(\lambda)-\pari(\nu)\equiv ||\lambda||-||\nu|| \mod 2.$$
For $t=0$ one has $d\equiv l_f(p,q)$  by~(\ref{formulaford}), and
$||\lambda||-||\nu||=q-p$ if $\tail(g)-\tail(g)\not=2$ and $q+p$ otherwise.
For $t=2$ the formula~(\ref{formulaford}) gives
$d\equiv l_f(p,q)+\tail(\lambda)-\tail(\nu)$; in this case
$||\lambda||-||\nu||\equiv q-p +\tail(\lambda)-\tail(\nu)$.
This gives (iii).
\end{proof}

\subsubsection{}\label{Dgosp}
Retain notation of~\ref{usefulgraphs}.
For $t=0,2$ the graph $\Gamma^{\chi}=(\Lambda^{\chi},E)$ has   the edges $\nu\overset{e}{\to} \lambda$ with
$b(e)=j$  if and only if 
the diagram of $\lambda$ can be obtained from the diagram of $\nu$
by a move  which ends at the $j$th symbol $\times$  
in the diagram of $\lambda$; in this case we
denote by $b'(e)=q$ the coordinate of the $j$th symbol
$\times$ in the diagram of $\lambda$. For the edge as above the Poincar\'e polynomial is the sum of $z^d$ for all $d$ 
such that $\lambda$ can be obtained from the diagram of $\nu$
by a move of degree $d$. By~(\ref{formulaford})
 $\kappa(e)=z^d$ except for the
case $t=0$ with $\tail(\nu)-\tail(\lambda)=1$; in the latter case
$\kappa(e)=z^d$ or $z^d(1+z^{2\tail(\lambda)})$ (see Section~\ref{arcs} for details). 

By above, $\tau$ gives a bijection between the graphs $\Gamma^{\chi}$
for $t=2$ and $t=1$ and this bijection is compatible with the functions
$b$ and $\kappa$. For $t=1$ we define $b'$ on $\Gamma^{\chi}$
using this bijection.

\subsubsection{}
\begin{cor}{corpa}
\begin{enumerate}
\item  The map $\pari(\lambda)$ is a parametric bipartition on
 $(\Gamma^{\chi},\kappa)$.

\item If $\nu\to\lambda$ is an edge in $\Gamma^{\chi}$, then
$\nu<\lambda$ and $||\nu||_{gr}<||\lambda||_{gr}$. In particular, 
$||\lambda||_{gr}$
defines a  $\mathbb{N}$-grading on
$\Gamma^{\chi}$.

\item The graph $\Gamma^{\chi}$ satisfies 
the assumption  (Tail).

\item The functions $b,b'$  are decreasingly equivalent and $b'$ satisfies 
the property (BB) of~\Lem{lemAA}.
\end{enumerate}
\end{cor}
\begin{proof}
Consider the cases $t=0,2$.
\Cor{coremovedeg} implies (i)--(iii). For (iv) take a path
 $\lambda(f_1)\overset{e_1}{\longrightarrow }\lambda(f_2)\overset{e_2}{\longrightarrow }\lambda(f_3)$ 
 in $\Gamma^{\chi}$. 
 Since $f_3$ is obtained from $f_2$ by a move which ends at the symbol
 $\times$ with the coordinate $b'(e_2)$,
the position with this coordinate in $f_2$ is empty, so $b'(e_1)\not=b'(e_2)$.
Hence $b'$ satisfies (BB). 
It remains to verify that $b$ and $b'$  are decreasingly equivalent.
Set $j:=b(e_1)$, $q:=b'(e_1)$. Then $q$ is the coordinate of the
 $j$th symbol $\times$ in $f_2$ and  $q>0$.
The condition $b(e_1)>b(e_2)$ means that for $i\geq j$ the $i$th symbols $\times$ 
in $f_2$ and $f_3$ have the same coordinates, whereas the condition
$q>b'(e_2)$ means  that for $s\geq q$ one has $f_2(s)=f_3(s)$. Clearly, 
these conditions are equivalent, so  $b$ and $b'$  are decreasingly equivalent.

Consider the remaining case $t=1$. Since $\tau$ preserves 
$\pari$, $\tail$ and $||\ ||_{gr}$, almost all assertions for $t=1$ 
follows from the corresponding assertions for $t=2$. The only exception
is the inequality $\nu<\lambda$ in (ii), which follows from the following observation:
 for an edge $\nu\to\lambda$ in $\Gamma^{\chi}$ for $t=1$, the diagram
of $\lambda$ is obtained 
 from the diagram of $\nu$ either by moving
 symbol(s) $\times$  to the right or by changing the sign 
 from $-$ to $+$. 
\end{proof}

\subsubsection{}
\begin{cor}{propmove}
Take $\lambda,\nu\in\Lambda^{\chi}$ with 
$\Ext^1(L(\lambda),L(\nu))\not=0$. Then

(i) either 
$\lambda$ can be obtained
from $\nu$ by a move of zero degree  or $\nu$ can be obtained
from $\lambda$ by a move of zero degree;

(ii) $\pari(\lambda)\not=\pari(\nu)$.
\end{cor}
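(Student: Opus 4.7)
The plan is to obtain both assertions as essentially immediate consequences of the edge-to-move dictionary established in~\ref{gammai}--\ref{move1} together with the categorical machinery of Section~\ref{Gamma}. By~\Cor{corpa}, the map $\pari$ is a $\mathbb{Z}_2$-grading on $\Gamma(\chi)$, and $\Gamma(\chi)$ satisfies (FinPath) and (Tail). To invoke~\Cor{maincor}(iii) I would first verify the hypothesis that $0$ is a minimal dominant weight for each Levi $\fl^{(p)}=\osp(2p+t|2p)$: with the mixed base~(\ref{mix}) this follows from the coordinate description in~\ref{lambda+rho}, since any dominant integral weight of $\fl^{(p)}$ in the principal block has non-negative $a_i$, so $\lambda\geq 0$ in the root order.

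Granted this, \Cor{maincor}(iii) delivers two things at once: $\Ext(\chi)$ sits inside the undirected graph $\Gamma^1(\chi)$, and $\pari$ restricts to a bipartition of $\Ext(\chi)$. Assertion (ii) is then the bipartition claim verbatim. For (i), the only task left is to unwrap what an edge of $\Gamma^1(\chi)$ between $\lambda$ and $\nu$ means. By the definition of $\Gamma^1(\chi)$, such an edge comes from a directed edge $e$ in $\Gamma(\chi)$ of degree $\deg(e)=1$, either $\nu\to\lambda$ or $\lambda\to\nu$; by the description of $\Gamma(\chi)$ in~\ref{gammai}, this is precisely a move of degree $\deg(e)-1=0$ from the source diagram to the target diagram. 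Thus either the diagram of $\lambda$ is obtained from that of $\nu$ by a degree-$0$ move, or vice versa, which is exactly~(i).

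As a cross-check, (ii) can also be seen directly without invoking the bipartition statement: by~\Cor{coremovedeg}(iii), a move of degree $d=0$ forces $\pari(L(\lambda(g)))=(-1)^{0+1}\pari(L(\lambda(f)))=-\pari(L(\lambda(f)))$. The main obstacle is nothing mathematically deep, but one must keep the degree conventions straight: the function $\deg$ on edges of $\Gamma(\chi)$ is shifted by $1$ relative to the degree of the underlying combinatorial move, and \Cor{maincor}(iii) selects edges with $\deg(e)=1$, equivalent to moves of degree $0$ --- the degenerate case of~\Cor{coremovedeg}(iii).
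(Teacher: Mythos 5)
Your proposal is correct and follows essentially the same route as the paper: the paper's proof is the one-line observation that \Cor{corpa} verifies the hypotheses of \Cor{maincor}(iii), which then yields both assertions. You simply spell out the details the paper leaves implicit (the minimality of the zero weight for each $\fl^{(p)}$, and the translation of a degree-$1$ edge of $\Gamma(\chi)$ into a degree-$0$ move), plus an independent cross-check of (ii) via \Cor{coremovedeg}(iii); all of this is consistent with the paper's conventions.
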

\begin{proof}
By~\Cor{corpa}, the graph $\Gamma^{\chi}$ 
satisfies the assumptions of~\Cor{maincor} (iii), which implies the assertions.
\end{proof}

%
%
%
%
%
%
%
%
%


\subsection{Gruson-Serganova character formula}\label{GScharosp}
We retain notation of~\ref{euler};
for $\nu\in\Lambda^{\chi}$ we  introduce the ``Euler character'' $\mathcal{E}_{\nu}$ by~(\ref{eqE}).

\subsubsection{}
A character formula is given by Theorem 4 in~\cite{GS}. Using~\Cor{corpa} we can
write this formula for $\lambda\in\Lambda^{\chi}$
 in the following way:
\begin{equation}\label{GSchar}
\ch L(\lambda)=
\displaystyle\sum_{\nu\in \Lambda^{\chi}} (-1)^{\pari(\lambda)-\pari(\nu)}d^{\lambda,\nu}_< \mathcal{E}_{\nu},\end{equation}
 where $d^{\lambda,\nu}_<$ is the number of increasing paths
from $\nu$ to $\lambda$ in the graph $D^{\chi}$, where $D^{\chi}$ is obtained from $\Gamma^{\chi}$ by doubling the edges $e$ with $\kappa(e)\not=z^d$
(i.e.,  $D^{\chi}=\Gamma^{\chi}$  for $t\not=0$, see~\ref{Dgosp}).

Notice that $d^{\lambda,\nu}_<$ are non-negative integers, 
$d^{\lambda,\lambda}_<=1$ and
$d^{\lambda,\nu}_<\not=0$ implies $\nu\leq \lambda$
and $||\nu||\leq ||\lambda||$ (in particular,
the right-hand side of~(\ref{GSchar}) is finite).

\subsection{}\begin{rem}{genchar}
Recall that each block of atypicality $k$ for $\osp(M|N)$ is equivalent to the principal block
 of $\fosp(2k+t|2k)$ where $t=1$ for odd $M$  and $t=0,2$ for even $M$.
For  a dominant weight $\lambda$ of atypicality $k$ let $\ol{\lambda}$ be the image of $\lambda$ in $\Lambda^{\chi}$
(that is the $\fosp(2k+t|2k)$-module $L(\ol{\lambda})$ is the image of $\osp(M|N)$-module $L(\lambda)$
under the above equivalence). 
It turns out that this equivalence ``preserve tails'', i.e.
$\tail(\lambda)=\tail(\ol{\lambda})$. 

Introducing
$\pari(\lambda):=\pari(\ol{\lambda})$ we obtain
$\Ext^1(L(\lambda),L(\nu))=0$ if
$\pari(\lambda)=\pari(\nu)$. 
By~\cite{GS}, the formula~(\ref{GSchar}) holds for an arbitrary dominant 
weight $\lambda$ if we introduce
$\mathcal{E}_{\nu}$
by the formula~(\ref{eqE}) and set $d^{\lambda,\nu}_<:=d^{\ol{\lambda},\ol{\nu}}_<$. 
\end{rem}

\section{Arch diagrams}\label{arcs}
In the cases when $\fg$ is not exceptional and the flag of parabolic is standard,
 the description  of $\Gamma^{\chi}$ in~\cite{MS},\cite{GS} can be conveniently
presented  in terms of arc diagrams
introduced in~\cite{GSBGG}, \cite{GH}, where the examples
are presented. Below we will present  this description 
of $\Gamma^{\chi}$ for the principal blocks in $\fgl(k|k),\osp(2k+t|2k)$. 

Our diagrams differs from the arc- or cup diagrams of \cite{ES}; we will call these diagrams  ``arch diagram''. 

We take $\fg=\fgl(k|k),\osp(2k+t|2k)$ and
the central character  $\chi$ corresponding to the principal block.
 For $\fg=\fgl(k|k)$ we take 
$\Sigma=\{\vareps_1-\vareps_2,\ldots,\vareps_k-\delta_1,\delta_1-\delta_2,\ldots,\delta_{k-1}-\delta_k\}$ and the flag~(\ref{parabolicchain}) with $\fl^{(i)}\cong\fgl(i|i)$.
For $\osp(2k+t|2k)$ we retain notation of~\ref{hwtpr}.

\subsection{Arch diagram}
Take $\lambda\in\Lambda^{\chi}$. 

For $\osp(2k+t|2k)$ we 
assign to $\lambda$ the weight diagram  as in~\ref{lambda+rho}.
For $\fgl(k|k)$-case  $\lambda+\rho=\sum_{i=1}^k a_i(\varesp_i-\delta_{k+1-i})$ 
and we assign to $\lambda$ a weight diagram
with the symbols $\times$ at the positions $a_1,\ldots,a_k$ and the empty
symbols $\circ$ in other positions.

A {\em generalized  arch diagram} is the following data: 
\begin{itemize}
\item[$\bullet$]
a weight diagram $f$, where
the symbols $\times$ at the zero position are drawn vertically and
 $>$ (if it is present) is drawn in the bottom,
\item[$\bullet$]
a collection of non-intersecting arches, where each arch  is 
\begin{itemize}
\item
either $\arc(a;b)$  connecting
 the symbol $\times$  with the empty symbol
at the position $b$;
\item
or $\arc(0;b,b')$  connecting
 the symbol $\times$ at the zero position with two empty symbols
at the positions $b<b'$;
\end{itemize}

\end{itemize}

An empty position  is called {\em free} 
 if this position is not an end of an arch.

We call $\arc(a;b)$ a {\em two-legged arch supported at} $a$ and 
$\arc(0;b,b')$ a {\em three-legged arch supported at} $0$.

A generalized arch diagram is called {\em arch diagram} if  

\begin{itemize}
\item[$\bullet$]
each symbol $\times$ is the left end of exactly one arch;

\item[$\bullet$]
there are no free positions under the arches;

\item[$\bullet$]
for the $\fgl$-case all arches are two-legged;

\item[$\bullet$]
for the $\osp(2k|2k),\osp(2k+1|2k)$-cases the lowest $\times$ at the zero position
supports a two-legged arch and the other symbols $\times$ at the zero position support three-legged arches;

\item[$\bullet$]
for the $\osp(2k+2|2k)$-case  all symbols $\times$  at the zero position support three-legged arches.
\end{itemize}

Each weight diagram $f$ admits a unique arch diagram
which we denote by $\Arc(f)$; this diagram can be constructed
in the following way:
we pass from right to left through the weight diagram and connect each symbol $\times$ with the next empty symbol(s) to the right by an arch.

\subsubsection{Partial order}
We consider a partial order on the set of arches by saying that one arch is smaller than
another one if the first one is "below" the second one; one has
$$\begin{array}{l}
arc(a;b)>arc(a';b')\ \ \Longleftrightarrow\ \ a<a'<b\\
arc(0;b_1,b_2)>arc(a';b')\ \ \Longleftrightarrow\ \ a'<b_2,
\end{array}$$
in addition, any two distinct three-legged arches 
are comparable.

\subsubsection{}
For a weight diagram $f$ we  denote by $l_f(p,q)$ the number of $\times-$ the number of $\circ$
strictly between the positions $p$ and $q$. We  denote by  $(f)_p^q$ the diagram
which obtained from $f$ by moving  $\times$ from the position $p$ to a free position $q>p$; such diagram is defined only if
$$ f(p)\in \{\times^i,\overset{\ \ \times^{i}}{>}\} \text{ for }s\geq 1,\ 
f(q)=\circ.$$ 
For instance, for $f=\times^2\circ\times$ one has $(f)_0^3=\times\circ\times\times$ and $(f)_0^2, (f)_1^5$ are not defined.
 If $f(0)=\times^i$
or $\overset{\ \ \times^{i}}{>}$ for $i>1$, we denote by $(f)_{0,0}^{p,q}$ 
the diagram
which obtained from $f$ by moving two symbols $\times$ from the zero position 
 to  free positions $p$ and $q$ with $p<q$; for example, $(\times^2\times)_{0,0}^{3,4}=\circ\times\circ\times\times$.

\subsection{Description of $\Gamma^{\chi}$ in diagrammatic terms} 
Below we present the description for the cases $\fgl(k|k), \osp(2k|2k)$ and $\osp(2k+2|2k)$ (the map $\tau$ gives a bijection between the graphs for
$\osp(2k+1|2k)$ and $\osp(2k+2|2k)$).

Take $\nu,\lambda\in\Lambda^{\chi}$ and denote by $f$ and $g$ their weight diagrams.  
We denote by $l_f(a,b)$ the number of $\times$ minus the number of $\circ$
with the coordinates strictly between $a$ and $b$. Note that
$l_f(a,b)=l_{f'}(a,b)$ if $f'=(f)_a^b$.

The graph  $\Gamma^{\chi}$ contains an edge 
$\nu\overset{e}{\longrightarrow} \lambda$ if and only if $g=(f)_a^p$
or $g=(f)^{p,q}_{0,0}$ and  the folowing conditions hold.

\subsubsection{Case  $g=(f)_a^p$ and $f(a)=\times$} \label{521}
In this case $\Arc(f)$ contains a 
two-legged arch $\arc(a;a_-)$ with $a<p\leq a_-$;
 one has $\kappa(e)=z^{l_f(a,p)}$ and
$l_f(a,b)=l_g(a,b)$.

\subsubsection{Case  $g=(f)_0^p$ and $f(0)\not=\times$}
In this case  $\Arc(f)$  contains three-legged arches (i.e., either $\fg=\osp(2k|2k)$ and
$tail(f)\geq 2$ or $\fg=\osp(2k+2|2k)$ and $\tail(f)\geq 1$).
For this case
we denote by $\arc(0;a',a_+)$  the highest three-legged
arch in $\Arc(f)$ (for example,
for $\nu=0$ this is $\arc(0;2k-2,2k-1)$ if $\fg=\osp(2k|2k), k>1$
and $\arc(0;2k-1,2k)$ if $\fg=\osp(2k+2|2k)$).
In this case $p\leq a_+$. 

If $f(0)=\overset{\ \ \times^{i+1}}{>}$ for $i\geq 0$ (then $t=2$) we have
 $\kappa(e)=z^{2i+1+l_f(0,p)}$. In the remainng case
 $f(0)=\times^{i+1}$, $i>0$ (then $t=0$) there is a unique
two-legged 
$\arc(0;a_-)$    supported at the zero position and
$\kappa(e)=z^{2i+l_f(0,p)}$ if $p>a_-$ and 
$\kappa(e)=z^{2i+l_f(0,p)}+z^{l_f(0,p)}$ if $p\leq a_-$.

For example, for $f=\times^3\times\circ\circ\circ\times$
and $g=(f)_0^2=\times^2\times\times\circ\circ\times$ one has $\kappa(e)=z+z^5$
and for $g'=(f)_0^4=\times^2\times\circ\circ\times\times$ one has $\kappa(e')=z^3$.

\subsubsection{Case $g=(f)^{p,q}_{0,0}$ }\label{524}
In this case $\Arc(f)$ contains a three-legged arch  $\arc(0;p,a_2)$
supported at the zero position which is not the highest arch 
and  $p<q\leq a_2$; one has 
$\kappa(e)=z^{l_f(p,q)}$.

Examples.

For $t=0$, $\tail(f)\leq 2$ (resp., $t=2$, $\tail(f)\leq 1$)
there are no suitable three-legged arches.

For $f=\times^3\circ\circ\times\times$ there is only one
suitable three-legged arch, which is $\arc(0;2,7)$. Thus $p=2$, 
$q\in\{5,6,7\}$ and $\kappa(e)=z^{7-q}$.

For $f=\overset{\ \ \times^{2}}{>}\circ\times\times\circ\times$  there is only one
suitable three-legged arch, which is $\arc(0;1,8)$. Thus $p=1$,
 $q\in\{4,6, 7,8 \}$ with $\kappa(e)=2$ for $q=5,6$ and
$\kappa(e)=8-q$ for $q=7,8$.

\subsection{Applications to $\Ext$-graph}
Take $\nu,\lambda\in\Lambda^{\chi}$ with $\nu<\lambda$ and denote by $f$ and $g$ their weight diagrams.

By~\cite{MS}, in the $\fgl$-case $\Ext^1(L(\lambda),L(\nu))\not=0$  if and only if 
$g=(f)_a^p$, where $\arc(a,p)\in \Arc(f)$ (i.e. $g$ can be obtained from $f$ by moving a symbol $\times$ along the arch 
supported by this symbol).

By above, for $\fg=\osp(2k+t|2k)$ with $t=0,2$  if $\Ext^1(L(\lambda),L(\nu))\not=0$, then either
$g=(f)_a^p$, where $\arc(a,p)\in \Arc(f)$ or
$g=(f)_0^{a_+}$ where $\arc(0;a',a_+)$ is the highest arch supported at $0$, or
$g=(f)^{a_1,a_2}_{0,0}$, where $\arc(0;a_1,a_2)$ is not the highest arch supported at $0$.

\section{Appendix: useful facts about $\Ext^1$}\label{sectapp}

\subsection{}
We start from the following lemma which express 
$\dim \Ext^1(L',L)$ in terms of indecomposable extensions.

\begin{lem}{lemExt}
Let $A$ be an associative algebra and $L,L'$ be simple non-isomorphic modules
over $A$ with $\End_A(L)=\End_A(L')=\mathbb{C}$.
Let $N$ be a module with
\begin{equation}\label{Nini}
\Soc N=L^{\oplus m},\ \ N/\Soc N=L'.\end{equation}

(i) If $N$ is indecomposable, then $m\leq \dim \Ext^1(L',L)$.

(ii) If $m\leq \dim \Ext^1(L',L)$, then there exists an indecomposable $N$ satisfying~(\ref{Nini}).
\end{lem}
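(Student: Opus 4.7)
The plan is to apply the contravariant functor $\Hom_A(-,L)$ to the short exact sequence $0\to L^{\oplus m}\to N\to L'\to 0$. Using $L\not\cong L'$ (so $\Hom_A(L',L)=0$) and $\End_A(L)=\mathbb{C}$ (so $\Hom_A(L^{\oplus m},L)\cong (\mathbb{C}^m)^*$), the long exact sequence yields a natural identification
\[
\Hom_A(N,L)\ =\ \ker\bigl(f_N\colon (\mathbb{C}^m)^*\longrightarrow \Ext^1(L',L)\bigr),
\]
where $f_N$ is the connecting homomorphism $\psi\mapsto \psi_*[N]$. The central claim, from which both parts of the lemma follow at once, is that $N$ is indecomposable if and only if $f_N$ is injective.

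For the ``if'' direction, assume $\Hom_A(N,L)=0$ and suppose for contradiction that $N=N_1\oplus N_2$ is a nontrivial decomposition. Since $\Soc$ is additive and $N/\Soc N=L'$ is simple, one summand (say $N_2$) must satisfy $N_2/\Soc N_2=0$, i.e., is semisimple; then $N_2\subseteq \Soc N=L^{\oplus m}$ gives $N_2\cong L^{\oplus b}$ with $b\geq 1$, and the composite $N\twoheadrightarrow N_2\twoheadrightarrow L$ is a nonzero element of $\Hom_A(N,L)$, a contradiction. For the ``only if'' direction, take any nonzero $\pi\in\Hom_A(N,L)$. Then $\pi|_{\Soc N}\neq 0$ (otherwise $\pi$ factors through $N/\Soc N=L'$ and vanishes by $\Hom_A(L',L)=0$), so there exists an inclusion $\iota\colon L\hookrightarrow\Soc N\hookrightarrow N$ with $\pi\circ\iota\neq 0$; since $\End_A(L)=\mathbb{C}$, rescaling makes $\pi\circ\iota=\id_L$, exhibiting $L$ as a direct summand of $N$ and hence producing a decomposition.

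Given the claim, part (i) is immediate: indecomposability of $N$ forces $f_N$ injective, so $m=\dim(\mathbb{C}^m)^*\leq \dim\Ext^1(L',L)$. For part (ii), the case $m=0$ is trivial with $N=L'$; for $m\geq 1$, choose any injective linear map $f\colon(\mathbb{C}^m)^*\hookrightarrow \Ext^1(L',L)$ (available by hypothesis) and let $N$ be the extension whose class in $\Ext^1(L',L^{\oplus m})$ corresponds to $f$. Injectivity of $f=f_N$ forces the sequence to be non-split (a splitting would make $f_N=0$), so $\Soc N=L^{\oplus m}$ exactly (no extra $L'$ can appear in the socle without making $N$ semisimple, hence split) and $N/\Soc N=L'$; the claim then delivers indecomposability. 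The one delicate step is the ``only if'' direction of the claim, where one must promote the mere existence of a nonzero map $\pi\colon N\to L$ to an honest splitting; this is precisely where $\End_A(L)=\mathbb{C}$ enters, allowing the rescaling that converts a nonzero composite $L\xrightarrow{\iota} N\xrightarrow{\pi}L$ to the identity.
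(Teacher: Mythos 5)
Your proof is correct and is essentially the paper's argument repackaged: the classes $\Phi_i$ in the paper's proof are exactly the images $f_N(p_i)$ of the coordinate projections under your connecting map, so ``the $\Phi_i$ are linearly (in)dependent'' is the same statement as ``$f_N$ is (non)injective'', and both proofs reduce indecomposability to the vanishing of $\Hom_A(N,L)$ via the same rescaling trick using $\End_A(L)=\mathbb{C}$. Your long-exact-sequence formulation lets both directions of the lemma fall out of the single claim that indecomposability is equivalent to injectivity of $f_N$, which is a cleaner presentation than the paper's explicit pushout diagrams, but the mathematical content is identical.
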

\begin{proof}
Consider any exact sequence of the form
\begin{equation}\label{ExtN}
0\to L^{\oplus m}\overset{\iota}{\longrightarrow} N
\overset{\phi}\longrightarrow L'\to 0.\end{equation}
For each $i=1,\ldots,m$ let
$p_i$ be the projection from $L^{\oplus m}$ to the $i$th component
and $\theta_i$ is the corresponding embedding $p_i\theta_i=Id_L$.
Consider a commutative diagram 
\begin{equation}\label{commdia}
\xymatrix{& 0\ar[r]\ar[d] & L^{\oplus m} \ar^{\iota}[r]\ar^{p_i}[d]&
N\ar^{\phi}[r]\ar^{\psi_i}[d]&L'\ar[r]\ar^{Id}[d]& 0\ar[d]&\\
& 0\ar[r] & L\ar^{\theta_i}[u]
 \ar^{\iota_i}[r]&
M^i\ar^{\phi_i}[r]&L'\ar[r]& 0&
}
\end{equation}
where $\psi_i: N\to M^i$ is a surjective map with 
$\Ker\psi_i=\Ker p_i$. 

The bottom line
is an element of $\Ext^1(L',L)$, which we denote by $\Phi_i$.
If $m>\dim \Ext^1(L',L)$, then $\{\Phi_i\}_{i=1}^m$ are linearly dependent 
and we can assume that $\Phi_1=0$, so $\Phi_1$ splits. Let
 $\tilde{p}: M^1\to L$ be the projection, i.e.
$\iota_1\tilde{p}=Id_L$. Consider the maps
$$L\overset{\iota\circ\theta_1}{\longrightarrow} N \overset{\tilde{p}\circ\psi_1}{\longrightarrow}L.$$
The composed map
$$\tilde{p}\circ\psi_1\circ\iota\circ\theta_1: L\to L$$
is surjective, so it is an isomorphism. 
Hence $N$ is decomposable. This establishes
(i).

For (ii)  let $\{\Phi_i\}_{i=1}^m$ be linearly independent  elements in  $\Ext^1(L',L)$, i.e.
$$
\Phi_i:\ \ \ \ \ 0\to L\overset{\iota_i}{\longrightarrow} M^i\overset{\phi_i}{\longrightarrow} L'\to 0.$$
Consider the exact sequence 
$$0\to L^{\oplus m}\longrightarrow  \oplus M_i\to (L')^{\oplus m}\to 0.$$
Let $diag(L')$ be the diagonal copy of $L'$ in $(L')^{\oplus m}$ and
let $N$ be the preimage of $diag(L')$
in $\oplus M^i$. This gives the exact sequence of the form~(\ref{ExtN})
and the commutative diagram~(\ref{commdia}).
Let us show that $N$ is indecomposable. Assume that $N$ decomposable, so
 $N=N_1\oplus N_2$. Since $L'\not\cong L$ one has
$\phi(N_1)=0$ or $\phi(N_2)=0$, so $N_1$ or $N_2$  lies in the socle of $N$.
Therefore $N$ can be written as $N=L^1\oplus N''$ with $L^1\cong L$. 
Since $\dim \Hom(L,N)=m$ one has
$\dim \Hom(L,N'')=m-1$.
Changing the basis in the span of $\{\Phi_i\}_{i=1}^m$,
we can assume that 
$\Ker p_1\subset N''$. Since
$\Ker\psi_1=\Ker p_1$, the exact sequence $\Phi_1$ splits, a contradiction.
Hence $N$ is indecomposable. 
This completes the proof.
\end{proof}

\subsection{Notation}
Let $\fg$ be a Lie superalgebra of at most countable dimension. 

Let $\fh\subset\fg_0$ be a finite-dimensional subalgebra
satisfying


(H1) $\fh$ acts diagonally on $\fg$ and $\fg_0^{\fh}=\fh$.

We choose $h\in\fh$ satisfying

(H2)   $\fg^{h}=\fg^{\fh}$ and each non-zero eigenvalue
 of $\ad h$ has a non-zero real part.

(The assumption on $\dim\fg$ ensures the existence of $h$).

We write $\fg=\fg^{\fh}\oplus (\oplus_{\alpha\in\Delta(\fg)}\fg_{\alpha})$,
with  $\Delta(\fg)\subset\fh^*$ and
$$\begin{array}{l}

 \fg_{\alpha}:=\{g\in\fg|\ [h,g]=\alpha(h)g\ \text{ for all }h\in\fh\}.
 \end{array}$$
We introduce the triangular decomposition 
$\Delta(\fg)=\Delta^+(\fg)\coprod \Delta^-(\fg)$,
with
$$\Delta^{\pm}(\fg):=\{\alpha\in\Delta(\fg)|\ \pm Re\, \alpha(h)>0\},$$
 and  define the  partial order on $\fh^*$  by
$\lambda>\nu$  if $\nu-\lambda\in\mathbb{N}\Delta^-$. 
We set 
$\fn^{\pm}:=\oplus_{\alpha\in\Delta^{\pm}}\fg_{\alpha}$ and consider 
the Borel subalgebra $\fb:=\fg^{\fh}\oplus\fn^+$.

\subsubsection{}\label{patpat}
Take  $z\in\fh$ satisfying
\begin{equation}\label{condz}
\alpha(z)\in\mathbb{R}_{\geq 0}\ \text{ for  }\alpha\in\Delta^+\ \text{ and }
\alpha(z)\in\mathbb{R}_{\leq 0}\ \text{ for }\alpha\in\Delta^-.
\end{equation}
Consider the superalgebras
$$\ft:=\ft(z):=\fg^z,\ \ \ \fp:=\fp(z):=\fg^z+\fb.$$
 Notice that $\fp=\ft\rtimes\fm$,
 where 
 $\fm:= \oplus_{\alpha\in\Delta: \alpha(z)>0}\fg_{\alpha}$. 
Both triples $(\fp(z),\fh,h)$, $(\ft(z),\fh,h)$ satisfy (H1), (H2). One
has $\ft^{\fh}=\fp^{\fh}=\fg^{\fh}$ and
$$\begin{array}{l}
 \ \Delta^+(\fp)=\Delta^+(\fg),\ \ \ \ 
\ \Delta^+(\ft)=\{\alpha\in\Delta^{+}(\fg)|\ \alpha(z)=0\}\\
\Delta^-(\fp)=\Delta^-(\ft)=\{\alpha\in\Delta^{-}(\fg)|\ \alpha(z)=0\}
\end{array}$$

\subsubsection{Modules $M(\lambda),L(\lambda)$}\label{Mlambda}
For  a semisimple  $\fh$-module $N$ we
 denote by $N_{\nu}$ the weight space of the weight $\nu$ and by
 $\Omega(N)$ the set of weights of $N$.
 
 We denote by $\CO$ the full category of 
finitely generated modules with a diagonal action of $\fh$
and locally nilpotent action of $\fn$.

By Dixmier generalization of Schur's Lemma (see~\cite{Dix}),
up to a parity change, the simple $\fg^{\fh}$-modules are 
parametrized by
$\lambda\in\fh^*$; we denote by $C_{\lambda}$ a simple
$\fg^{\fh}$-module, where $\fh$ acts by $\lambda$.
We view $C_{\lambda}$ as a $\fb$-module with the zero action of $\fn$
and set
$$M(\lambda):=\Ind^{\fg}_{\fb} C_{\lambda};$$  
 this module
has a  unique simple quotient which we denote by 
$L(\lambda)$. 
(The module $M(\lambda)$ is a Verma module
if $\fg^{\fh}=0$). 
We introduce similarly the modules
$M_{\fp}(\lambda)$, 
 $L_{\fp}(\lambda)$ for the algebra $\fp$ and 
 $M_{\ft}(\lambda)$, $L_{\ft}(\lambda)$ for the algebra $\ft$.

\subsubsection{Set $N(\fg;m)$}
Let $\lambda\not=\nu\in\fh^*$ be such that  $Re (\lambda-\nu)(h)\geq 0$.
If 
$$0\to L(\nu)\to E\to L(\lambda)\to 0$$
is a non-split exact sequence, then $E$ is generated
by $E_{\lambda}\cong C_{\lambda}$, so
$E$ is a quotient of $M(\lambda)$ and $\nu<\lambda$.

For $\lambda,\nu\in\fh^*$ 
we denote by $\cN(\fg;m)$ the set of indecomposable $\fg$-modules
$N$ such that 
\begin{equation}\label{Nm}
\Soc N=L(\nu)^{\oplus m},\ \ \  N/\Soc N=L(\lambda)\end{equation}
 By~\Lem{lemExt} one has
\begin{equation}\label{ExtNm}
\dim\Ext^1(L(\lambda),L(\nu))=\max\{m|\ \cN(\fg;m)\not=0\}.
\end{equation}

 Note that each module $N\in\cN (\fg;m)$ is a quotient
of $M(\lambda)=\Ind^{\fg}_{\fb} L_{\fb}(\lambda)$.
We set
\begin{equation}\label{mgp}
m(\fg;\fp;\lambda;\nu):=max\{m|\ \exists 
 N\in \cN(\fg;m)\ \text{ which is a quotient of } \Ind^{\fg}_{\fp} L_{\fp}(\lambda)\}.
 \end{equation}

\subsubsection{}
\begin{cor}{corExtM}
Take $\lambda\not=\nu\in\fh^*$ with $Re (\lambda-\nu)(h)\geq 0$.

(i) If $\Ext^1(L(\lambda),L(\nu))\not=0$, then 
$\lambda>\nu$;

(ii) $\Ext^1(L(\lambda),L(\nu))=\Ext^1_{\CO}(L(\lambda),L(\nu))$;

(iii)
$\dim\Ext^1(L(\lambda),L(\nu))=m(\fg;\fb;\lambda;\nu)\leq \dim M(\lambda)_{\nu}$.
\end{cor}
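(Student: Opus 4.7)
The plan is to reduce all three parts to a single observation: once $\lambda>\nu$, any indecomposable $N$ with $\Soc N\supseteq L(\nu)^{\oplus m}$ and $N/\Soc N\cong L(\lambda)$ is a quotient of $M(\lambda)$. Combined with \Lem{lemExt} this yields (i)--(iii) by weight counting.

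For (i), take a non-split extension $0\to L(\nu)\to E\to L(\lambda)\to 0$ and pick $v\in E_\lambda$ lifting a generator of $L(\lambda)_\lambda$. The submodule $U:=\cU(\fg)v$ surjects onto $L(\lambda)$; if $U\cap L(\nu)=0$ then $E=U\oplus L(\nu)$ would split, so $U=E$. Now $\fn^+v\subset L(\nu)$, but by (H2) each weight of $\fn^+v$ has $\Ree$-evaluation at $h$ strictly larger than $\Ree\lambda(h)$, whereas every weight $\mu$ of $L(\nu)$ satisfies $\Ree\mu(h)\leq\Ree\nu(h)\leq\Ree\lambda(h)$ by hypothesis. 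Hence $\fn^+v=0$, so $E$ is a quotient of $M(\lambda)$, whose weights lie in $\lambda+\mathbb{N}\Delta^-$, giving $\nu<\lambda$. Part (ii) follows at once: every non-split extension is a quotient of the $\CO$-module $M(\lambda)$ and hence itself lies in $\CO$; split extensions lie in $\CO$ trivially, so $\Ext^1(L(\lambda),L(\nu))=\Ext^1_{\CO}(L(\lambda),L(\nu))$.

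For (iii), \Lem{lemExt} identifies $\dim\Ext^1(L(\lambda),L(\nu))$ with $\max\{m:\cN(\fg;m)\neq\emptyset\}$, and if this is non-zero then (i) gives $\lambda>\nu$. To generalise the argument to an arbitrary $N\in\cN(\fg;m)$, I would put $V:=\cU(\fg)N_\lambda$ and use semisimplicity of $\Soc N$ to write $\Soc N=(V\cap\Soc N)\oplus W$; since $V+\Soc N=N$ and $V\cap W=0$, indecomposability of $N$ forces $W=0$, so $V=N$. The weight argument from (i) then gives $\fn^+N_\lambda=0$, showing $N$ is a quotient of $M(\lambda)$; this matches the definition~(\ref{mgp}) of $m(\fg;\fb;\lambda;\nu)$. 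The final inequality is immediate from $m\leq\dim(\Soc N)_\nu\leq\dim N_\nu\leq\dim M(\lambda)_\nu$.

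The main subtlety lies in the generation argument in (iii): the direct splitting argument from (i) only handles $m=1$, and one must exploit semisimplicity of $\Soc N$ together with indecomposability of $N$ to recover $V=N$ when $m\geq 2$.
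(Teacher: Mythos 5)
Your argument is correct and follows the same route the paper takes implicitly in the two paragraphs preceding the corollary: every non-split extension (and more generally every $N\in\cN(\fg;m)$) is generated by its $\lambda$-weight space, which is annihilated by $\fn^+$ by the real-part comparison under (H2), hence is a quotient of $M(\lambda)$; combined with \Lem{lemExt} and the definition~(\ref{mgp}) this gives (i)--(iii). The socle-complement argument you supply to upgrade the generation claim from $m=1$ to arbitrary $m$ is exactly the detail the paper leaves unstated, and it is carried out correctly.
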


\subsection{Remark}
\label{remKM}
If $\fg$ is a Kac-Moody superalgebra, then $\fg$ admits antiautomorphism which stabilizes the elements of
$\fh$ and the category $\CO(\fg)$ admits
 a duality functor $\#$
with the property $L^{\#}\cong L$ for each simple module
$L\in\CO(\fg)$. In this case
$$\dim \Ext^1(L(\lambda),L(\nu))=\dim \Ext^1(L(\nu),L(\lambda)).$$

\subsection{}
The following lemma is a slight reformulation of Lemma 6.3 in~\cite{MS}.

\begin{lem}{lemExtVerma}
Take $\lambda,\nu\in\fh^*$ with $\lambda>\nu$. 

(i) $m(\fg;\fb;\lambda;\nu)\leq m(\fp;\fb;\lambda;\nu)$ if $\nu-\lambda\in \mathbb{N}\Delta^-(\fp)$;

(ii) 
$m(\fg;\fb;\lambda;\nu)=
m(\fg;\fp;\lambda;\nu)$  if $\nu-\lambda\not\in \mathbb{N}\Delta^-(\fp)$.
\end{lem}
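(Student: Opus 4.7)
\emph{Proof plan.}  Let $N\in\cN(\fg;m)$ with $m=m(\fg;\fb;\lambda;\nu)$ and let $v\in N_\lambda$ be a cyclic vector; note that $N_\lambda$ is one-dimensional (since $L(\nu)^{\oplus m}$ has no $\lambda$-weight) and $\fn^+(\fg)\cdot v=0$.  I will study the $\fp$-submodule $N':=U(\fp)v\subseteq N$.  Let $\mathfrak{u}:=\bigoplus_{\alpha\in\Delta^-(\fg)\setminus\Delta^-(\fp)}\fg_\alpha$, so that $\fn^-(\fg)=\fn^-(\fp)\oplus\mathfrak{u}$ and PBW gives $U(\fn^-(\fg))=U(\mathfrak{u})U(\fn^-(\fp))$; by~\ref{patpat} the roots of $\mathfrak{u}$ satisfy $\alpha(z)<0$ while those of $\fn^-(\fp)$ satisfy $\alpha(z)=0$.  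Hence $N'=U(\fn^-(\fp))v$ has weights in $\lambda+\mathbb{N}\Delta^-(\fp)$ (in particular $\mu(z)=\lambda(z)$), and conversely the PBW decomposition of $N=U(\fn^-(\fg))v$ forces $N_\mu=N'_\mu$ whenever $\mu(z)=\lambda(z)$.  I will also use repeatedly the following auxiliary fact: inside any simple $\fg$-module $L(\sigma)$, the $\fp$-submodule $U(\fp)v_\sigma$ equals $L_\fp(\sigma)$ and is its unique simple $\fp$-submodule, since any $\fn^+(\fp)=\fn^+(\fg)$-invariant vector lies in $L(\sigma)_\sigma=\mathbb{C}v_\sigma$.

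For part (ii), the hypothesis $\nu-\lambda\notin\mathbb{N}\Delta^-(\fp)$ together with $\lambda>\nu$ forces $\nu(z)<\lambda(z)$.  The weights of $\Soc_\fg N=L(\nu)^{\oplus m}$ are bounded by $\nu(z)$, hence disjoint from the weights of $N'$ (all equal to $\lambda(z)$), so $N'\cap\Soc_\fg N=0$.  Consequently $N'$ embeds in $N/\Soc_\fg N=L(\lambda)$, and its image, being $U(\fp)\bar v$ for $\bar v$ the image of $v$, coincides with $L_\fp(\lambda)$ by the auxiliary fact.  Frobenius reciprocity then lifts the resulting $\fp$-map $L_\fp(\lambda)\to N$ to a $\fg$-surjection $\Ind^\fg_\fp L_\fp(\lambda)\twoheadrightarrow N$, so $N$ is a quotient of $\Ind^\fg_\fp L_\fp(\lambda)$.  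The reverse inequality $m(\fg;\fp;\lambda;\nu)\le m(\fg;\fb;\lambda;\nu)$ is immediate because $\Ind^\fg_\fp L_\fp(\lambda)$ is itself a quotient of $M(\lambda)=\Ind^\fg_\fb L_\fb(\lambda)$.

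For part (i), the hypothesis gives $\nu(z)=\lambda(z)$ and hence $N_\nu=N'_\nu$ by the PBW observation.  Writing $\Soc_\fg N=\bigoplus_{i=1}^m L(\nu)_i$, each highest-weight vector $v_i\in(L(\nu)_i)_\nu\subseteq N_\nu\subseteq N'$, so $L_\fp(\nu)_i:=U(\fp)v_i\subseteq N'$ and $L_\fp(\nu)^{\oplus m}\subseteq N'\cap\Soc_\fg N$.  The reverse containment follows because $N'\cap L(\nu)_i$ has only weights with $z$-value $\nu(z)$, and inside $L(\nu)_i$ the corresponding weight spaces coincide with those of $L_\fp(\nu)_i$ (a PBW argument applied within $L(\nu)_i$).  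Passing to quotients, $N'/L_\fp(\nu)^{\oplus m}\hookrightarrow N/\Soc_\fg N=L(\lambda)$ has image $L_\fp(\lambda)$, so there is a short exact sequence $0\to L_\fp(\nu)^{\oplus m}\to N'\to L_\fp(\lambda)\to 0$ of $\fp$-modules.  Since $N'_\lambda$ is one-dimensional, any decomposition $N'=N'_1\oplus N'_2$ forces $v$ to lie in one summand, contradicting $N'=U(\fp)v$; hence $N'$ is $\fp$-indecomposable, and this in turn forces $\Soc_\fp N'=L_\fp(\nu)^{\oplus m}$ (a larger $\fp$-socle would contain a copy of $L_\fp(\lambda)$ splitting the sequence).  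Thus $N'\in\cN(\fp;m)$ and is a quotient of $M_\fp(\lambda)$, so \Lem{lemExt}(i) (applied to $\fp$) combined with~\Cor{corExtM}(iii) gives $m\le\dim\Ext^1_\fp(L_\fp(\lambda),L_\fp(\nu))=m(\fp;\fb;\lambda;\nu)$.

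The main obstacle is the $\fp$-socle bookkeeping in part (i): a priori the $\fp$-socle of $N'$ could contain extra simple submodules not visible from the $\fg$-socle, and ruling this out requires both the PBW splitting $U(\fn^-(\fg))=U(\mathfrak{u})U(\fn^-(\fp))$ (to control intersections with $L(\nu)_i$) and the indecomposability of $N'$ (to exclude an $L_\fp(\lambda)$-summand).  Everything else is weight-counting driven by the strict negativity of $\alpha(z)$ on $\mathfrak{u}$ and the rigidity of highest-weight vectors in simple $\fg$-modules.
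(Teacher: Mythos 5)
Your proof is correct and follows essentially the same route as the paper: the $\fp$-submodule $U(\fp)v$ you work with is exactly the paper's restriction functor $\Res(N)=\{w\in N\mid zw=\lambda(z)w\}$, and your ``auxiliary fact'' is the paper's identity $\Res(L(\mu))=L_{\fp}(\mu)$ for $\mu(z)=\lambda(z)$ (which the paper proves without invoking singular vectors, a cleaner justification than yours in the infinite-dimensional setting). The only divergences are cosmetic --- in (ii) you obtain the surjection $\Ind^{\fg}_{\fp}L_{\fp}(\lambda)\twoheadrightarrow N$ by Frobenius reciprocity rather than by the paper's analysis of the kernel $J$, and you should replace ``$N_{\lambda}$ is one-dimensional'' by ``$N_{\lambda}\cong C(\lambda)$ is a simple $\fg^{\fh}$-module,'' since the paper's setup allows $\fg^{\fh}\supsetneq\fh$; neither change affects the argument.
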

\begin{proof}
Write $\fp=\fg^z\rtimes\fm$ as in~\ref{patpat}. 
For each $\fg$-module $M$ set
$$\Res(M):=\{v\in M| \ zv=\lambda(z)v\}.$$
Note that $\Res(M)$ is a $\fg^z$-module; we view $\Res(M)$ as a $\fp$-module 
with the zero action of $\fm$. This defines an exact functor
$\Res: \fg-Mod\to \fp-Mod$. By the PBW Theorem
$$
\Res(M(\mu))=\left\{\begin{array}{ll}
M_{\fp}(\mu) &\ \text{ if } \mu(z)=\lambda(z)\\
0&\ \text{ if }  (\lambda-\mu)(z)<0\end{array}\right.
$$
Let us show that 
\begin{equation}\label{Res}
\Res(L(\mu))=\left\{\begin{array}{ll}
L_{\fp}(\mu) &\ \text{ if }  \mu(z)=\lambda(z)\\
0&\ \text{ if } (\lambda-\mu)(z)<0.\end{array}\right.\end{equation}
Indeed, since $\Res$ is exact, 
$\Res(L(\mu))$ is a quotient of $\Res(M(\mu))$; this gives the second formula.
For the first formula assume that $\mu(z)=\lambda(z)$ and that $E$ is a
proper submodule of 
$\Res(L(\mu))$. Since $E$ is a $\fp$-module we have
$\cU(\fg)E=\cU(\fn^-)E$. 
Since $\Res(L(\mu))$ is a quotient of $\Res(M(\mu))$
and
$$(\Res(M(\mu)))_{\mu}=(M_{\fp}(\mu))_{\mu}$$
is a simple $\fg^{\fh}$-module, one has $\gamma<\mu$ for each $\gamma\in\Omega(E)$. Therefore $(\cU(\fn^-)E)_{\mu}=0$, so
$\cU(\fg)E$ is a proper $\fg$-submodule of $L(\mu)$. Hence $E=0$, so
 $\Res(L(\mu))$ is simple. This  establishes~(\ref{Res}).

Now we fix a non-negative integer $m\leq  m(\fg;\fb;\lambda;\nu)$ and 
$N\in \cN(\fg;m)$.

Consider the case when $\nu-\lambda\in \mathbb{N}\Delta^-(\fp)$.
Then $\lambda(z)=\nu(z)$, so
 $\Res(L(\nu))=L_{\fp}(\nu)$.
Since $\Res$ is exact one has
$$\Soc (\Res N)=L_{\fp}(\nu)^{\oplus m},\ \ \ 
\Res(N)/\Soc (\Res (N))=L_{\fp}(\lambda)$$
and $\Res(N)$ is a quotient of $M_{\fp}(\lambda)$.
Therefore $m\leq m(\fp;\fb;\lambda;\nu)$. This gives (i).

For (ii)  $\nu-\lambda\not\in \mathbb{N}\Delta^-(\fp)$.
 Let us show  that $N$ is a quotient of $\Ind_{\fp}^{\fg} L_{\fp}(\lambda)$. 
Write
$$\Ind_{\fp}^{\fg} L_{\fp}(\lambda)=M(\lambda)/J,\ \ \ L_{\fp}(\lambda)=M_{\fp}(\lambda)/J'$$
where $J$ (resp., $J'$) is the corresponding submodule of $M(\lambda)$
(resp., of  $M_{\fp}(\lambda)$).
Since $\Ind^{\fg}_{\fp}$ is exact and $\Ind^{\fg}_{\fp} M_{\fp}(\lambda)=M(\lambda)$
one has
$J\cong \Ind^{\fg}_{\fp} J'$; in particular, 
 each maximal element in $\Omega(J)$ lies in $\Omega(J')$.
Note that 
$$\Omega(J')\subset \lambda-\mathbb{N}\Delta^+(\fl).$$

Let $\phi: M(\lambda)\twoheadrightarrow N$ be the canonical surjection.
Since $J_{\lambda}=0$, $\phi(J)$ is a proper submodule
of $N$, so  $\phi(J)$ is a submodule of
$\Soc(N)=L(\nu)^{\oplus m}$. 

If $\phi(J)\not=0$, then $\nu$ is a maximal element
in $\Omega(J)$ and so $\lambda-\nu\in \mathbb{N}\Delta^+(\fl)$,
which contradicts to $\nu-\lambda\in\mathbb{N}\Delta^-(\fp)$.
Therefore $\phi$ induces a map 
$\Ind_{\fp}^{\fg} L_{\fp}(\lambda)=M(\lambda)/J \twoheadrightarrow N$.
Therefore 
$$m(\fg;\fb;\lambda;\nu)\leq m(\fg;\fp;\lambda;\nu).$$
Since $\Ind_{\fp}^{\fg} L_{\fp}(\lambda)$ is a quotient of $M(\lambda)$,
we have 
$m(\fg;\fb;\lambda;\nu)\geq m(\fg;\fp;\lambda;\nu)$.
Thus $m(\fg;\fb;\lambda;\nu)=m(\fg;\fp;\lambda;\nu)$ as required.
\end{proof}

\subsection{}
\begin{cor}{corExtchain}
Let $z_1,\ldots,z_{k-1}\in\fh$ satisfying~(\ref{condz}) be such that
 $\ft^{z_i}\subset \ft^{z_{i+1}}$. Set $\ft^{(i)}:=\ft^{z_i}$
and consider the chain
 $$\fh=:\ft^{(0)}\subset\fp^{(1)}\subset \fp^{(2)}\subset\ldots\subset \ft^{(k)}:=\fg.$$
For $\lambda\not=\nu\in\fh^*$ with $\lambda>\nu$
one has
$$\Ext^1(L(\lambda),L(\nu))\leq m(\ft^{(s)};\fp;\lambda;\nu),$$
where $s$ is minimal such that $\nu-\lambda\in\mathbb{N}\Delta^-(\ft^{(s)})$ and $\fp:=(\ft^{(s-1)}+\fb)\cap \ft^{(s)}$.
\end{cor}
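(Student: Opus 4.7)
The plan is to iterate Lemma~\ref{lemExtVerma} along the chain of Levi subalgebras $\ft^{(0)} \subset \ldots \subset \ft^{(k)} = \fg$, reducing the Ext-dimension in $\fg$ step by step to one inside $\ft^{(s)}$. First I would invoke Corollary~\ref{corExtM}(iii) to rewrite $\dim \Ext^1(L(\lambda), L(\nu)) = m(\fg; \fb; \lambda; \nu)$; it then suffices to bound this quantity by $m(\ft^{(s)}; \fp; \lambda; \nu)$.

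For each $i = 1, \ldots, k$ I would set $\fb_i := \fb \cap \ft^{(i)}$, $\fp_i := (\ft^{(i-1)} + \fb) \cap \ft^{(i)}$, and $m_i := m(\ft^{(i)}; \fb_i; \lambda; \nu)$. The preparatory observations are that each triple $(\ft^{(i)}, \fh, h)$ still satisfies (H1), (H2), and that by~\ref{patpat} the subalgebra $\fp_i$ is the parabolic of $\ft^{(i)}$ associated with $z_{i-1}$, with Levi $\ft^{(i-1)}$ and $\Delta^-(\fp_i) = \Delta^-(\ft^{(i-1)})$. Via Remark~\ref{remh0} applied to the semidirect decomposition $\fp_i = \ft^{(i-1)} \rtimes \fm_i$ (the $\fm_i$-character compatibility being vacuous since $\fm_i \cap \fh = 0$), I would identify $m(\fp_i; \fb_i; \lambda; \nu)$ with $m_{i-1}$.

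Next I would iterate Lemma~\ref{lemExtVerma}. For each $i$ with $s < i \leq k$, the minimality of $s$ and the containment $\ft^{(s)} \subset \ft^{(i-1)}$ yield $\nu - \lambda \in \mathbb{N}\Delta^-(\ft^{(s)}) \subset \mathbb{N}\Delta^-(\fp_i)$, so Lemma~\ref{lemExtVerma}(i) applied inside $\ft^{(i)}$ to the parabolic $\fp_i$ gives $m_i \leq m_{i-1}$. Telescoping from $i = k$ down to $i = s+1$ delivers $m_k \leq m_s$. At the bottom step $i = s$, minimality of $s$ now forces $\nu - \lambda \notin \mathbb{N}\Delta^-(\ft^{(s-1)}) = \mathbb{N}\Delta^-(\fp_s)$, so Lemma~\ref{lemExtVerma}(ii) applied inside $\ft^{(s)}$ gives the equality $m_s = m(\ft^{(s)}; \fp; \lambda; \nu)$. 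Combining with $m_k \leq m_s$ and with the opening Ext-to-$m$ translation finishes the argument.

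The main obstacle I anticipate is purely structural bookkeeping: verifying that each intersection $\fp_i$ really is the parabolic of $\ft^{(i)}$ in the precise sense of Lemma~\ref{lemExtVerma} (arising from an element of $\fh$ satisfying~(\ref{condz}) for the $\ft^{(i)}$-root system), confirming that $(\ft^{(i)}, \fh, h)$ inherits (H1), (H2), and that the identification $m(\fp_i; \fb_i; \lambda; \nu) = m_{i-1}$ through Remark~\ref{remh0} goes through cleanly at every step. Once this framework is in place, the corollary follows immediately from the telescoping application of Lemma~\ref{lemExtVerma}.
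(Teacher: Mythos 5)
Your argument is correct and follows essentially the same route as the paper: translate $\Ext^1$ into $m(\fg;\fb;\lambda;\nu)$ via Corollary~\ref{corExtM}, then reduce along the chain using Lemma~\ref{lemExtVerma}, with part (i) handling the levels above $s$ and part (ii) (via minimality of $s$) producing the parabolic $\fp$ at level $s$. The only cosmetic difference is that you telescope through every intermediate Levi $\ft^{(i)}$, invoking Remark~\ref{remh0} at each step, whereas the paper applies the lemma directly to the parabolic $\fp^{(s)}=\ft^{(s)}+\fb$ of $\fg$ and performs the quotient by the nilradical $\fm^{(s)}$ explicitly at the end; the content is the same.
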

\begin{proof} 
Take $\fp^{(i)}:=\ft^{(i)}+\fb$.
Combining~\Cor{corExtM} and~\Lem{lemExtVerma} we obtain
$$\Ext^1(L(\lambda),L(\nu))=m(\fg;\fb;\lambda;\nu)\leq m(\fp^{(s)};\fp^{(s-1)};\lambda;\nu).$$
Recall that
$\fp^{(i)}=\ft^{(i)}\rtimes\fm^{(i)}$, where $\fm^{(i)}$ is the maximal  ideal in
$\fp^{(i)}$ which lie in $\fn$. 
One has $\fm^{(s)}\subset\fm^{(s-1)}$. In particular,
$\fm^{(s)}$ annihilates $\Ind^{\fp^{(s)}}_{\fp^{(s-1)}} L_{\fp^{(s-1)}}(\lambda)$,
so
$$m(\fp^{(s)};\fp^{(s-1)};\lambda;\nu)=m(\ft^{(s)};\fp^{(s-1)}/\fm^{(s)};\lambda;\nu).
$$
Since 
$$\fp^{(s-1)}=\ft^{(s-1)}+\fb\subset\fp^{(s)}=\ft^{(s)}\rtimes\fm^{(s)}$$
the image of $\fp^{(s-1)}$ in
$\ft^{(s)}=\fp^{(s)}/\fm^{(s)}$ 
coincides with $\fp\subset \ft^{(s)}$.
\end{proof}


\subsubsection{Remark}\label{remh0}
Consider the special case when $\fg^{\fh}=\fh$ and $\ft^{(s)}=\fl'\times \fh''$, where $\fh''\subset\fh$. Take 
$$\fh':=\fl\cap \fh,\ \ \ \fb':=\fl\cap\fb,\ \ \ \fp':=\fl'\cap \fp.$$
 and let $h'$ be  the image of
$z_s$ in $\fh'$. 
Then the triple $(\fl';\fh';h')$ satisfies (H1), (H2)
and $\fb'$ is the Borel subalgebra of $\fl'$. For each 
 $\lambda\in\fh^*$ we set 
$$\lambda':=\lambda|_{\fh'}.$$
Since $\nu-\lambda\in\mathbb{N}\Delta^-(\ft^{(s)})$ one has
$$m(\ft^{(s)};\fp;\lambda;\nu)=m(\fl', \fp',\lambda',\nu').$$

Assume, in addition, that $L_{\fl'}(\lambda'), L_{\fl'}(\nu')$ 
are finite-dimensional. It is not hard to see that $\Ind^{\fl'}_{\fp'} (L_{\fp'}(\lambda'))$  admits a unique maximal finite-dimensional subquotient which we denote by  $\Gamma_{\fl',\fp'}(L_{\fp'}(\lambda'))$ and that for any  finite-dimensional quotient $N'$ of $\Ind^{\fl'}_{\fp'} (L_{\fp'}(\lambda'))$ 
there exists an epimorphism $\Gamma_{\fl',\fp'}(L_{\fp'}(\lambda'))\to N'$.
This implies
$$\Ext^1(L(\lambda),L(\nu))\leq m(\fl', \fp',\lambda',\nu')\leq 
[\Gamma_{\fl',\fp'}(L_{\fp'}(\lambda')):L_{\fl'}(\nu')].$$

\end{document}